\newcommand{\ds}{\displaystyle}
\newcommand{\ZZ}{\mathbb{Z}}
\newcommand{\RR}{\mathbb{R}}
\newcommand{\cF}{\mathcal{F}}
\newcommand{\cE}{\mathcal{E}}
\newcommand{\EE}{\mathbb{E}}
\newcommand{\CR}[1]{\textcolor{red}{#1}}
\newtheorem{prop}{Proposition}   
\newtheorem{lem}[prop]{Lemma}
\newtheorem{thm}[prop]{Theorem}
\newtheorem{corollary}[prop]{Corollary}
\title{Behavior of the Minimum Degree Throughout the $d$-process}
\author{Jakob Hofstad\thanks{This work was supported by the National Science Foundation (NSF) under research grant DMS-2246907. The NSF had no role in the production or publication of this manuscript.}
\\ Carnegie Mellon University
}
\begin{document}

\maketitle

\begin{center}
    {\bf Abstract}
\end{center}

The $d$-process generates a graph at random by starting with an empty graph with $n$ vertices, then adding edges one at a time uniformly at random among all pairs of vertices which have degrees at most $d-1$ and are not mutually joined. We show that, in the evolution of a random graph with $n$ vertices under the $d$-process with $d$ fixed, with high probability, for each $j \in \{0,1,\dots,d-2\}$, the minimum degree jumps from $j$ to $j+1$ when the number of steps left is on the order of $\ln(n)^{d-j-1}$. This answers a question of Ruci\'nski and Wormald. More specifically, we show that, when the last vertex of degree $j$ disappears, the number of steps left divided by $\ln(n)^{d-j-1}$ converges in distribution to the exponential random variable of mean $\frac{j!}{2(d-1)!}$; furthermore, these $d-1$ distributions are independent.

\section{Introduction}

There are numerous models that generate different types of sparse random graphs. Among them is the \textit{$d$-process}, defined in the following way: start with $n$ vertices and 0 edges, and at each time step, choose a pair $\{u,v\}$ uniformly at random over all pairs consisting of vertices which have degree less than $d$ and are not joined to each other by an edge. $d$ could be allowed to change with $n$, but for the rest of this paper $d$ is always a fixed constant (this is also assumed in all relevant citations). Ruci{\'n}ski and Wormald showed that with high probability, abbreviated ``w.h.p." (i.e., with probability converging to 1 as $n \to \infty$) the $d$-process ends with $\lfloor dn/2 \rfloor$ edges \cite{d-regular-likely}. There is still much that is unknown about the $d$-process; for example, it is not known whether the $d$-process is {\it contiguous} with the $d$-uniform random graph model for any $d \geq 2$; i.e., if any event that happens with high probability in one happens with high probability in the other. A recent paper by Molloy, Surya, and Warnke \cite{NotUniform} disproves this relation if there is enough ``non-uniformity" of the vertex degrees (with an appropriate modification of the $d$-process for non-regular graphs); it also contains a good history of the $d$-process. See \cite[Section 9.6]{JSRRandom} for more on contiguity.

A couple of notable results have been given for the case where $d = 2$: the expected numbers of cycles of constant sizes were studied by Ruci{\'n}ski and Wormald in \cite{2-process-short-cycles}, and in \cite{Hamiltonicity}, Telcs, Wormald, and Zhou calculated the probability that the 2-process ends with a Hamiltonian cycle. In these works, the authors establish estimates on certain graph parameters, such as the number of vertices of a certain degree, that hold throughout the process. This is done with the so-called  ``differential equations method" for random graph processes, which uses martingale inequalities to give variable bounds; in \cite{DiffQMethod} Wormald gives a thorough description of this method. 

More recently, Ruci\'nski and Wormald  announced a new analysis of the $d$-process that hinges on a coupling with a balls-in-bins process. This simple argument gives a precise estimate of the probability that the $d$-process ends with  $\lfloor dn/2 \rfloor$ edges (i.e., the probability that the $d$-process reaches saturation). This argument includes estimates for the number of vertices of each degree near the end of the process. This work was presented by Ruci\'nski at the 2023 \textit{Random Structures and Algorithms} conference. Ruci\'nski's presentation included the following problem (which was open at the time): when do we expect the last vertex of degree $j$ (for any $j$ from 0 to $d-2$) to disappear? The question was also stated earlier for $d=2$ and $j=0$ by Ruci{\'n}ski and Wormald \cite[Question 3]{2-process-short-cycles}. In November of 2023, after the first release of the pre-print of this paper, Ruci{\'n}ski and Wormald released a pre-print of their balls-and-bins argument which also included an answer to Ruci{\'n}ski's question \cite{d-process-balls-bins}. Our main result uses the differential equations method (as described in the previous paragraph) and gives a slightly stronger answer:

\begin{thm} \label{Theorem: main}
    Consider the d-process on a vertex set of size n, and for each $\ell \in \{0\} \cup [d-2]$, let the random variable 
    $T_{\ell}$ be the step at which the number of vertices of degree at most $\ell$ becomes 0. Then the sequence (over $n$) of random $d-1$-tuples consisting of the variables $$V_{n}^{(\ell)} = \frac{(d-1)!(dn-2T_{\ell})}{\ell!(\ln(n))^{d-1-\ell}}$$

    converges in distribution to the product of $d-1$ independent exponential random variables of mean 1. 
\end{thm}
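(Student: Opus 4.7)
Set $\tau_\ell(v) := v\,\ell!\,(\ln n)^{d-1-\ell}/(2(d-1)!)$ and $T_\ell(v) := \lfloor dn/2\rfloor - \tau_\ell(v)$. Since the count $N_{\leq\ell}$ of vertices of degree at most $\ell$ is weakly decreasing during the process, $\{V_n^{(\ell)}\geq v\} = \{N_{\leq \ell}(T_\ell(v)) = 0\}$, so the theorem reduces to proving the joint Poisson convergence
\[
\bigl(N_{\leq \ell}(T_\ell(v_\ell))\bigr)_{\ell=0}^{d-2} \xrightarrow{d} \bigl(\mathrm{Poisson}(v_\ell)\bigr)_{\ell=0}^{d-2}\ \text{with independent components},
\]
which then yields $P(V_n^{(\ell)}\geq v_\ell\ \forall \ell)\to \prod_\ell e^{-v_\ell}$, the survival function of independent $\mathrm{Exp}(1)$s.

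\emph{Step 1: expected counts.} Wormald's differential-equations method, combined with the Ruci\'nski--Wormald balls-in-bins coupling that controls the process into the near-saturation regime, yields whp
\[
N_j(t)\sim n\, e^{-\phi(t)}\,\phi(t)^j/j!,\qquad j=0,1,\ldots,d-1,
\]
where $\phi(t)$ is determined implicitly by $(dn-2t)/n = e^{-\phi}(dE_{d-1}(\phi) - \phi E_{d-2}(\phi))$ with $E_k(\phi) = \sum_{j=0}^{k}\phi^j/j!$. The Poisson-in-$\phi$ form reflects that, under the edge-uniform $d$-process, each vertex's degree is a rate-$1$ Poisson counting process in the rescaled time $\phi(t) = \int_0^t 2/M(s)\,ds$, with $M(s)$ the number of vertices of degree $<d$. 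Near saturation, $2\tau/n\sim e^{-\phi}\phi^{d-1}/(d-1)!$, so $\phi = \ln n + O(\ln\ln n)$ when $\tau = O((\ln n)^{d-1})$, and substituting gives
\[
E[N_\ell(t)] \sim \frac{(d-1)!}{\ell!}\cdot\frac{2\tau}{(\ln n)^{d-1-\ell}},
\]
which equals $v$ at $t=T_\ell(v)$. For $k<\ell$, $E[N_k(T_\ell(v))]/E[N_\ell(T_\ell(v))] = O((\ln n)^{k-\ell})=o(1)$, so $N_{\leq\ell}(T_\ell(v)) = N_\ell(T_\ell(v))$ whp and $E[N_{\leq\ell}(T_\ell(v))]\to v$.

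\emph{Step 2: joint Poisson via factorial moments.} For distinct vertices $u_1,\ldots, u_K$ with target degrees $\ell_i$ at times $T_i$, the coupling gives $P\bigl(\deg u_i(T_i)=\ell_i\ \forall i\bigr) \sim \prod_i P(\deg u_i(T_i)=\ell_i)$, since different vertices' slot histories interact only through shared edges, which contribute $O(1/n)$ per pair and are negligible at the relevant scales. A single vertex counted in $N_{\ell_i}(T_{\ell_i}(v_{\ell_i}))$ and $N_{\ell_j}(T_{\ell_j}(v_{\ell_j}))$ with $\ell_i<\ell_j$ must make exactly $\ell_j-\ell_i$ Poisson transitions in the $\phi$-interval of length $\Delta\phi = \phi(T_{\ell_j}) - \phi(T_{\ell_i})$. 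Since $M\sim 2\tau$ near saturation gives $d\phi\sim d\tau/\tau$, one has $\Delta\phi\sim \ln(\tau_{\ell_i}/\tau_{\ell_j})\sim(\ell_j-\ell_i)\ln\ln n$, so the conditional probability is $e^{-\Delta\phi}\Delta\phi^{\ell_j-\ell_i}/(\ell_j-\ell_i)! = O((\ln\ln n/\ln n)^{\ell_j-\ell_i})$; multiplying by the vertex's $O(1/n)$ chance of being in state $\ell_i$ and summing over $n$ vertices, this diagonal contribution is $o(1)$. Hence
\[
E\Bigl[\prod_\ell(N_\ell(T_\ell(v_\ell)))^{(k_\ell)}\Bigr] \to \prod_\ell v_\ell^{k_\ell},
\]
matching the joint factorial moments of independent $\mathrm{Poisson}(v_\ell)$s, and the method of moments completes the proof.

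\emph{Main obstacle.} The technical heart is pushing the balls-in-bins coupling (and the associated concentration bounds) all the way into the polylogarithmic end regime $\tau = O((\ln n)^{d-1})$ with $N_\ell = O(1)$, where the bulk ODE and standard martingale bounds no longer suffice. This must be done with enough precision to pin down both the asymptotic $\phi\sim\ln n$ and the exact leading constant $(d-1)!/\ell!$. Given the expectation, the factorial-moment factorization requires controlling bad events (shared-edge corrections between pairs of vertices, and single-vertex diagonal contributions across time scales) with errors genuinely smaller than $\prod_\ell v_\ell^{k_\ell}$; here the time-scale separation $\tau_{\ell_j}/\tau_{\ell_i}\sim(\ln n)^{\ell_j-\ell_i}$ is crucial, and the fact that $M\sim 2\tau$ near saturation (so that $d\phi\sim d\tau/\tau$) translates this separation into the needed sub-leading bounds.
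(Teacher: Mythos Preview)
Your route is genuinely different from the paper's. The paper never touches the balls-in-bins coupling or factorial moments. Instead it tracks only the aggregate counts $S^{(j)}_i=\#\{v:\deg v\le j\}$ through three phases: a standard Azuma bound down to $\tau\sim n^{1-1/(100d)}$; a \emph{self-correcting} martingale estimate using critical intervals (whenever $S^{(j)}_i$ drifts into the band $[ns_j\pm 3E_j,\,ns_j\pm 4E_j]$ the drift pushes it back) down to $\tau\sim(\ln n)^{d-1.01-k}$; and finally, once $S^{(k)}=\Theta((\ln n)^{0.2})$ and $S^{(k-1)}=0$, a direct comparison of the $0/1$ sequence ``was a degree-$k$ vertex hit at this half-step?'' with the first $2(t_{\mathrm{end}}-t_{\mathrm{start}})$ digits of a uniformly random binary string with $L(n)$ ones in $2J(n)$ positions. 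The survival probability $e^{-r_k}$ is then read off from a binomial-coefficient ratio, and independence across $\ell$ comes for free because whp $S^{(k-1)}$ has already vanished before the $k$th final sub-phase begins. Your Poisson--moment route would, if it worked, give a conceptually cleaner explanation of why independent exponentials appear; the paper's route is more hands-on but entirely self-contained.

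There is, however, a genuine gap in your Step~2. First, the Ruci\'nski--Wormald coupling you invoke is unpublished; the paper explicitly notes this and is written precisely as an alternative that does \emph{not} rely on it, so you cannot treat it as a black box here. Second, the sentence ``different vertices' slot histories interact only through shared edges, which contribute $O(1/n)$ per pair'' is not a valid justification at the scale you need. In the final regime there are only $S^{(d-1)}\sim 2\tau=O((\ln n)^{d-1})$ unsaturated vertices, so at each step two fixed low-degree vertices interact (compete for the same edge, or are both candidates) with probability $\Theta(1/\tau)$, not $O(1/n)$; over the remaining $\Theta(\tau)$ steps this is an $\Theta(1)$ effect, and the degree trajectories of your $K$ tagged vertices are genuinely correlated through the shrinking pool even when they share no edge. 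Getting $E\bigl[\prod_\ell(N_\ell)^{(k_\ell)}\bigr]\to\prod_\ell v_\ell^{k_\ell}$ therefore requires exactly the near-saturation control you flag as the ``main obstacle'' in your last paragraph, and your proposal contains no idea for how to obtain it. Without that, the plan is a heuristic for why the answer should be independent exponentials, not a proof.
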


In this paper we use the differential equations method with increasingly precise estimates of certain random variables; these estimates are known as {\it self-correcting}. Previous results that use self-correcting estimates include \cite{Hamiltonicity}, \cite{SIR}, \cite{GreedyTriangle}, \cite{RandomTriangle}, \cite{DynamicTriangle}, and \cite{TriangleRamsey}. There have been various approaches to achieving self-correcting estimates; the approach in this paper uses {\it critical intervals}, regions of possible values of a random variable in which we expect subsequent variables to increase/decrease over time. Critical intervals used in this fashion first appeared in a result of Bohman and Picollelli \cite{SIR}. For an introduction to and discussion of the method see Bohman, Frieze, and Lubetzky \cite{GreedyTriangle}.

The proof of Theorem \ref{Theorem: main} is divided into four sections. In Section \ref{Section: Preliminaries} we introduce random variables of the form $S^{(j)}_i$ which count the number of vertices of degree at most $j$ after $i$ steps, define approximating functions $s_{j}(t)$ with the eventual goal of showing that $S^{(j)}_{i} \approx ns_{j}(i/n)$ for most of the process, and derive useful properties of these functions. One such property is that, when there are at most $n^{c}$ steps left for some constant $c < 1$, $$\frac{s_{j}(i/n)}{s_{j-1}(i/n)} = \Theta(\ln(n))$$ for each $j$; this hierarchy of functions helps us to focus on each variable $S^{(j)}_i$ independently of the others when it is near 0, which motivates the form of the limiting exponential random variables in Theorem \ref{Theorem: main}. At the end of Section \ref{Section: Preliminaries} we introduce two martingale inequalities used by Bohman \cite{Tri-free} and make a slight modification to them to use later in the paper. In Section \ref{Section: First} we work with a more `standard martingale method' (without the use of critical intervals) to show that $S^{(j)}_{i} \approx ns_{j}(i/n)$ until there are $n^{1 - 1/(100d)}$ steps left. Here we allow the error bounds to increase over time. In Section \ref{Section: Second} we use a more refined martingale method (including the use of critical intervals) to show that $S^{(j)}_{i} \approx ns_{j}(i/n)$ continues to hold until there are $\ln (n) ^{d-0.8-j}$ steps left; here the error bounds {\it decrease} over time, and so are self-correcting. In Section 5 we complete the proof of Theorem \ref{Theorem: main} by using a pairing argument to show that, in the last steps of the $d$-process, the behavior of the random variable in question can be well-approximated by a certain uniform distribution of time steps. Sections \ref{Section: Second} and \ref{Section: Final} are both parts of a proof by induction over a series of intervals of time steps, though we give each part its own section as the methods used in each are very different.

\section{Preliminaries} \label{Section: Preliminaries}

First, two technical notes: we use the standard notation of symbols $o, O, \Theta, \omega, \Omega, \ll, \gg$, and $\sim$ to compare functions asymptotically (e.g., see pages 9-10 of \cite{JSRRandom}). We also note that, throughout the paper, we assume $n$ to be arbitrarily large.

In this section we set up sequences of random variables, describe how the evolution of the $d$-process depends on these, and deduce properties of certain \textit{approximating functions}; such functions are used to estimate the number of vertices of given degrees throughout the process (much of this is also described in \cite{Hamiltonicity} with similar notation; the one major difference is that we use $i$ instead of $t$ for the number of time steps, and $t$ instead of $x$ for the corresponding time variable). Consider a sequence of graphs $G_0,G_1,\dots,G_{\lfloor dn/2 \rfloor}$, where $G_0$ is the empty graph of $n$ vertices, and for $i \in [n]$, let $G_i$ be formed by adding an edge uniformly at random to $G_{i-1}$ so that the maximum degree of $G_i$ is at most $d$ (in the unlikely event that there are no valid edges to add after $s$ steps for some $s < \lfloor dn/2 \rfloor$, let $G_i = G_{s}$ for all $i > s$).  Next, we define several sequences of random variables: For all $i, j, j_1, j_2$ such that $0 \leq i \leq \lfloor \frac{dn}{2} \rfloor$, $0 \leq j \leq d$, and $0 \leq j_1 \leq j_2 \leq d-1$, define: \\ \newline
\phantom{.}\qquad $Y^{(j)}_i := $ the number of vertices in $G_i$ with degree $j$ \newline
\phantom{.}\qquad $S^{(j)}_i := $ the number of vertices in $G_i$ with degree {\it at most} $j$   \newline
\phantom{.}\qquad $Z^{(j_1,j_2)}_i := $ the number of edges $\{v_1,v_2\}$ in $G_i$ for which $$\min \{\deg(v_1), \deg(v_2) \} = j_1 \ \text{ and } \ \max \{\deg(v_1), \deg(v_2)\} = j_2$$
\phantom{.}\qquad $Z_i := \ds\sum_{0 \leq j_1 \leq j_2 \leq d-1} Z^{(j_1,j_2)}_i$.\\

By definition and by edge-counting we have 
    $$\sum_{j=0}^{d} Y_i^{(j)} = n \qquad \text{and} \qquad \sum_{j=1}^{d} j Y_i^{(j)} = 2i.$$
Combining the equations above gives us

\begin{equation}
    \sum_{j=0}^{d-1} S_i^{(j)} = \sum_{j=0}^{d-1} (d-j) Y_i^{(j)} = dn-2i. \label{equ: Y sum}
\end{equation}

One can also quickly verify from (\ref{equ: Y sum}) and by definition of $S^{(d-1)}_i$, $Z^{(j_1,j_2)}_{i}$, and $Z_i$, that

\begin{equation}
    d S^{(d-1)}_i \geq \max \{2 Z_i, dn - 2i\} \qquad \text{and} \qquad j Y^{(j)}_i \geq \sum_{k \leq j} Z^{(k,j)}_i + \sum_{k \geq j} Z^{(j,k)}_i.  \label{equ: U, Z bounds}
\end{equation}

Throughout the process we will keep track of the variables $S^{(j)}$ using martingale arguments. This is sufficient for us, as the $Y$ variables can be derived from the $S$ variables, and because none of the $Z$ variables will have any significant effect in any of our calculations, as we will see later. \\

Our next step is to estimate the expected on-step change of $S^{(j)}_i$, known as the ``trend hypothesis" in \cite{DiffQMethod}. Note that $S^{(j)}_{i} - S^{(j)}_{i+1}$ equals the number of vertices of degree $j$ that are picked at the $i+1$ time step; hence, for all $j \in \{0\} \cup [d-1]$:

\vspace{-4mm}

\begin{align}
    \EE[S^{(j)}_{i+1} - S^{(j)}_{i} \mid G_i] &= \frac{-Y^{(j)}_i (S^{(d-1)}_i - 1) + \ds\sum_{k \leq j} Z^{(k,j)}_i + \sum_{k \geq j} Z^{(j,k)}_i }{\binom{S^{(d-1)}_i}{2} - Z_i} \nonumber \\&=
    \frac{-2Y^{(j)}_i}{S^{(d-1)}_i}\left(1 + O\left(\frac{1}{dn-2i}\right)\right) \label{equ: mult error} \qquad \text{by (\ref{equ: U, Z bounds})}
    \\&=
    \frac{-2 Y^{(j)}_i}{S^{(d-1)}_i}+ O\left(\frac{1}{dn-2i}\right). \label{equ: trend hypothesis}
\end{align}

For $j \in \{0\} \cup [d-1]$ we define approximating functions $y_j:[0,d/2) \to \RR$ and $s_j:[0,d/2) \to \RR$: let $y_j(t)$, $s_j(t)$ be functions such that $s_j = \sum_{k=0}^{j} y_k$, $y_0(0) = 1$ and $y_k(0) = 0$ for all $k \in [d-1]$ (equivalent to $s_j(0) = 1$ for all $j$), and (assuming the ``dummy functions" $y_{-1}(t) = s_{-1}(t) = 0$):

\begin{equation}
     \frac{ds_j}{dt} = \frac{-2y_{j}}{s_{d-1}} = \frac{2(s_{j-1} - s_j)}{s_{d-1}} \ \qquad \ \frac{dy_j}{dt} = \frac{2(y_{j-1} - y_j)}{s_{d-1}}.  \label{equ: y deriv}
\end{equation}

By (\ref{equ: y deriv}) and the chain rule, for all $j \in [d-1]$: $$\frac{d y_j}{d y_0} - \frac{y_j}{y_0} = -\frac{y_{j-1}}{y_0}.$$
Since the above equation is first-order linear, we have, for some constant $C_j$:
    $$y_j = y_0\left(-\int \frac{y_{j-1}}{y_0^2} d y_0 + C_j\right).$$
Using the above recursively with initial conditions, we have, for all $j \in [d-1]$:

\begin{equation}
    y_j = \frac{y_0 (-\ln(y_0))^j}{j!}. \label{equ: y relation}
\end{equation}

To solve for an explicit formula relating $y_0$ and $t$, note that, by (\ref{equ: y deriv}): $$\frac{d}{dt} \left( \sum_{j=0}^{d-1} (d-j) y_j \right)= \frac{-2 \ds\sum_{j=0}^{d-1}y_j}{s_{d-1}} = -2,$$
hence, using initial conditions (note the resemblance to (\ref{equ: Y sum})):

\begin{equation}
    \sum_{j=0}^{d-1} s_j = \sum_{j=0}^{d-1} (d-j) y_j = d - 2t. \label{equ: y explicit}
\end{equation}

Equations (\ref{equ: y relation}) and (\ref{equ: y explicit}) together give us a complete description of the functions $y_j$ and $s_j$. We will now prove some useful properties of these functions. To start, we can combine (\ref{equ: y relation}) and (\ref{equ: y explicit}) to get

\begin{equation}
    \sum_{j=0}^{d-1} \frac{y_0 (-\ln(y_0))^j (d-j)}{j!}  = d - 2t. \label{equ: y_0 explicit}
\end{equation}

Note that, by continuity of $y_0$ and by (\ref{equ: y_0 explicit}), $y_0 > 0$ over its domain. Next, by summing up (\ref{equ: y relation}) over $j \in \{0\} \cup [d-1]$ ((\ref{equ: y relation}) holds for $j=0$ also) one can see that $s_{d-1}$ is positive if $y_0 \leq 1$. This, combined with $\frac{d y_0}{dt} = \frac{-2y_0}{s_{d-1}}$, tells us that $y_0$ is decreasing and $s_{d-1}$ is positive. It follows from $y_0 \in (0,1]$ and (\ref{equ: y relation}) that each $y_j$ is positive for $t \not= 0$. In turn, this implies that $0 \leq y_j \leq s_j \leq s_{d-1}$ for each $j$. From this it follows that $ds_{d-1}$ is at least the left expression of (\ref{equ: y explicit}), so $s_{d-1} \geq 1 - \frac{2t}{d}$. We make a special note of the last couple of properties mentioned:

\begin{equation}
    0 \leq y_j \leq s_j \leq s_{d-1} \text{ for all $j$ \qquad and} \qquad s_{d-1}(i/n) \geq 1 - \frac{2i}{dn}. \label{equ: s_{d-1} bound}
\end{equation}

Next, we want to understand the behavior of each function when $t$ is close to $\frac{d}{2}$, as this is the most critical point of the process. Consider (\ref{equ: y_0 explicit}) again. As $t \to \frac{d}{2}, y_0 \to 0$, so $\frac{y_0(- \ln(y_0))^{d-1}}{(d-1)!}$ will be the most dominant term on the left; hence, $$
    t \to \frac{d}{2} \ \Longrightarrow \ y_0 \sim \frac{(d-1)! (d-2t)}{(-\ln(d-2t))^{d-1}}.$$ This, combined with (\ref{equ: y relation}) gives us, for all $j \in \{0 \} \cup [d-1]$:

\begin{equation}
    t \to \frac{d}{2} \ \Longrightarrow \ y_j(t) \sim s_j(t) \sim \frac{(d-1)!(d-2t)}{j!(-\ln(d-2t))^{d-1-j}} \label{equ: y approx initial}.
\end{equation}

For large enough $t$ (and hence, for a large enough step $i$), we can approximate the above expression:

\begin{equation}
    i \geq \frac{dn}{2} - n^{1 - 1/(100d)} \ \Longrightarrow \ ny_j(i/n) \sim ns_j(i/n)  = \Theta\left(\ln(n)^{-d+1+j}\left(
    \frac{dn}{2}-i\right) \right)\label{equ: y approx}.
\end{equation}

One can now see that, near the end of the process, $s_j/s_{j-1} = \Theta(\ln(n))$, as mentioned in the introduction.

Finally, we introduce two martingale inequalities from a result of Bohman \cite{Tri-free} which will be used in Section \ref{Section: Second} in a slightly modified form. The original inequalities are as follows:

\begin{lem}[Lemma 6 from \cite{Tri-free}] \label{Lemma: upper bound}
Suppose $a, \eta,$ and $N$ are positive, $\eta \leq N/2$, and $a < \eta m$. If $0 = A_0, A_1,\dots, A_m$ is a submartingale such that $-\eta \leq A_{i+1} - A_{i} \leq N$ for all $i$, then $$
    \mathbb{P}[A_m \leq -a] \leq e^{-\frac{a^2}{3\eta N m}}.
$$

\end{lem}

\begin{lem}[Lemma 7 from \cite{Tri-free}] \label{Lemma: lower bound}
Suppose $a, \eta,$ and $N$ are positive, $\eta \leq N/10$, and $a < \eta m$. If  $0 = A_0, A_1,\dots, A_m$ is a supermartingale such that $-\eta \leq A_{i+1} - A_{i} \leq N$ for all $i$, then $$
    \mathbb{P}[A_m \geq a] \leq e^{-\frac{a^2}{3\eta N m}}.
$$

\end{lem}
We present the following modification, which removes the requirement $a < \eta m$ and modifies one of the inequalities slightly:

\begin{corollary} \label{Cor: upper bound mod}
Suppose $a, \eta,$ and $N$ are positive, and $\eta \leq N/2$. If $0 = A_0, A_1,\dots, A_m$ is a submartingale such that $-\eta \leq A_{i+1} - A_{i} \leq N$ for all $i$, then $$
    \mathbb{P}[A_m \leq -a] \leq e^{-\frac{a^2}{3\eta N m}}.$$

\end{corollary}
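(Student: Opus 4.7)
The plan is to reduce Corollary~\ref{Cor: upper bound mod} to Lemma~\ref{Lemma: upper bound} by handling the newly-allowed regime $a \geq \eta m$ separately, since the other hypotheses ($\eta \leq N/2$ and the increment bounds $-\eta \leq A_{i+1} - A_i \leq N$) agree exactly between the two statements. I would split the argument into three cases according to the relation between $a$ and $\eta m$.

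The case $a < \eta m$ is immediate from Lemma~\ref{Lemma: upper bound}. The case $a > \eta m$ is also immediate: the telescoping bound $A_m - A_0 = \sum_{i=0}^{m-1} (A_{i+1} - A_i) \geq -\eta m$, combined with $A_0 = 0$, yields $A_m \geq -\eta m > -a$, so the event $\{A_m \leq -a\}$ has probability zero and the stated bound holds trivially.

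The only remaining case is the boundary $a = \eta m$, which I would handle by approximation from below. For any $a' \in (0,a)$ we have $a' < \eta m$, and the inclusion $\{A_m \leq -a\} \subseteq \{A_m \leq -a'\}$ holds since $a \geq a'$. Applying Lemma~\ref{Lemma: upper bound} to $a'$ then gives $\Pr[A_m \leq -a] \leq \Pr[A_m \leq -a'] \leq e^{-(a')^2/(3\eta N m)}$, and letting $a' \to a^{-}$ recovers the desired inequality $\Pr[A_m \leq -a] \leq e^{-a^2/(3\eta N m)}$. I do not anticipate any substantial obstacle: the corollary is essentially a cosmetic strengthening of Lemma~\ref{Lemma: upper bound} that removes an artificial restriction, and the only content beyond quoting that lemma is the trivial impossibility observation when $a > \eta m$ and the short limiting argument at the boundary.
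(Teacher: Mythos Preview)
Your proposal is correct and matches the paper's own argument essentially verbatim: the paper also quotes Lemma~\ref{Lemma: upper bound} for $a<\eta m$, observes that $A_m \geq -\eta m > -a$ forces $\Pr[A_m \leq -a]=0$ when $a>\eta m$, and handles the boundary $a=\eta m$ by the same left-continuity/limiting argument you give.
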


\begin{corollary} \label{Cor: lower bound mod}
Suppose $a, \eta,$ and $N$ are positive, and $\eta \leq N/10$. If  $0 = A_0, A_1,\dots, A_m$ is a supermartingale such that $-\eta \leq A_{i+1} - A_{i} \leq N$ for all $i$, then $$
    \mathbb{P}[A_m \geq a] \leq e^{-\frac{a^2}{3\eta N m}} + e^{-\frac{a}{6N}}.$$

\end{corollary}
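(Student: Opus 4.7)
The plan is to case-split on whether $a < \eta m$ or $a \geq \eta m$, with each regime contributing one of the two summands in the stated bound.

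In the small-deviation regime $a < \eta m$, Lemma \ref{Lemma: lower bound} applies unchanged and produces the first summand $e^{-a^2/(3\eta N m)}$, which alone dominates the probability.

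In the large-deviation regime $a \geq \eta m$, I would directly prove the stronger bound $\Pr[A_m \geq a] \leq e^{-a/(6N)}$ via a Chernoff-style exponential moment argument. Write $X_i = A_i - A_{i-1}$. Convexity of $e^{\lambda x}$ on $[-\eta, N]$ gives the secant-line inequality $e^{\lambda x} \leq \frac{N-x}{N+\eta} e^{-\lambda \eta} + \frac{x+\eta}{N+\eta} e^{\lambda N}$; taking conditional expectation, using that the resulting bound is increasing in the mean, and invoking the supermartingale hypothesis $\EE[X_i \mid \cF_{i-1}] \leq 0$ yields
\[
\EE[e^{\lambda X_i} \mid \cF_{i-1}] \leq \frac{N}{N+\eta} e^{-\lambda \eta} + \frac{\eta}{N+\eta} e^{\lambda N}.
\]
Specializing to $\lambda = 1/N$ and exploiting $\eta/N \leq 1/10$, a short Taylor expansion shows this quantity is at most $\exp(c\, \eta/N)$ for some explicit constant $c < 5/6$ (numerically, $c \leq 0.72$ suffices). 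Iterating over $i = 1, \dots, m$ and applying Markov's inequality gives $\Pr[A_m \geq a] \leq \exp(-a/N + c\, m\eta/N)$, and since $m\eta \leq a$ in this regime, the exponent is at most $-(1-c)\, a/N \leq -a/(6N)$.

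The main obstacle is tracking the numerical constant $c$ carefully: it must come out strictly less than $5/6$ so that, after the substitution $m\eta \leq a$, the resulting negative exponent has absolute value at least $a/(6N)$. The hypothesis $\eta \leq N/10$ is precisely what keeps the Taylor expansion of the two-point MGF tight enough to achieve this; without that hypothesis the coefficient $c$ could drift above $5/6$ and break the final inequality. Summing the contributions from the two cases yields the stated two-term bound.
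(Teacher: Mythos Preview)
Your proposal is correct: the case split at $a = \eta m$ is exactly the right move, and your Chernoff argument in the large-deviation regime is sound (the two-point MGF with $\lambda = 1/N$ does yield a bound of the form $e^{c\eta/N}$ with $c$ close to $e-2 \approx 0.718 < 5/6$, so after iterating and using $m\eta \leq a$ you land at $e^{-a/(6N)}$).

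The paper, however, handles the regime $a \geq \eta m$ more cheaply by reusing Lemma~\ref{Lemma: lower bound} as a black box rather than re-opening the exponential-moment calculation. It simply extends the supermartingale by constants: set $A_{m+1} = \cdots = A_{m'} = A_m$, where $m'$ is chosen so that $a < \eta m' \leq 2a$ (such an integer exists since $a/\eta \geq m \geq 1$). The extended sequence is still a supermartingale with increments in $[-\eta,N]$, and now the hypothesis $a < \eta m'$ of Lemma~\ref{Lemma: lower bound} is restored, giving $\Pr[A_m \geq a] = \Pr[A_{m'} \geq a] \leq e^{-a^2/(3\eta N m')} \leq e^{-a/(6N)}$, the last step using $\eta m' \leq 2a$. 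This avoids the numerical bookkeeping of your MGF bound entirely; on the other hand, your argument is self-contained and does not rely on Lemma~\ref{Lemma: lower bound} already having a sharp enough constant in its exponent.
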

Corollary \ref{Cor: upper bound mod} is nearly immediate from Lemma \ref{Lemma: upper bound}: first, one can extend the result to include $a = \eta m$ by using left-continuity (with respect to $a$) of both sides of the inequality; we hence assume $a > \eta m$. Since $A_{i+1} - A_i \geq -\eta > -a/m$, then $A_m = A_m - A_0 > -a$. We now derive Corollary \ref{Cor: lower bound mod} from Lemma \ref{Lemma: lower bound}: assume $a \geq \eta m$, and let $m' \in \ZZ^+$ such that $a < \eta m' \leq 2a$. Extend the martingale by adding variables $A_{m+1},\dots,A_{m'}$ which are all equal to $A_m$. Apply Lemma \ref{Lemma: lower bound} with $m$ replaced with $m'$, and use $\eta m' \leq 2a$ to get
    $$\mathbb{P}[A_{m} \geq a] = \mathbb{P}[A_{m'} \geq a] \leq e^{-\frac{a}{6N}}.$$
Combining the case $a < \eta m$ from Lemma \ref{Lemma: lower bound} and the case $a \geq \eta m$ above gives Corollary \ref{Cor: lower bound mod}.

\section{First phase} \label{Section: First}

Let $i_{trans} = \lfloor \frac{dn}{2} - n^{1 - 1/(100d)} \rfloor$. The objective of this section is to prove the following Theorem:

\begin{thm} \label{Theorem: 1st phase bounds}

Define $E_{first}(i) := n^{0.6} \left(\frac{dn}{dn-2i}\right)^{4d}$. With high probability, for all $i \leq i_{trans}$ and all $j \in \{ 0 \} \cup [d-1]$:

\begin{equation}
    \left|S^{(j)}_{i} - n s_j\left(\frac{i}{n}\right)\right| \leq E_{first}(i). \label{equ: 1st Phase Theorem bound}
\end{equation}
\end{thm}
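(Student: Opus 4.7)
The plan is to run the differential equations method with a stopping time argument, combined with the modified martingale inequalities from Corollaries \ref{Cor: upper bound mod} and \ref{Cor: lower bound mod}. Let $T^{\ast}$ be the smallest $i \leq i_{trans}$ for which (\ref{equ: 1st Phase Theorem bound}) fails for some $j$, setting $T^{\ast} = i_{trans}+1$ if no such step exists. For each $j \in \{0\} \cup [d-1]$ and each sign $\sigma \in \{+,-\}$, define
\[
A^{(j,\sigma)}_{i} \;:=\; \sigma\!\left(S^{(j)}_{i} - ns_{j}(i/n)\right) - E_{first}(i).
\]
The goal is to show that the stopped sequences $\tilde A^{(j,\sigma)}_{i} := A^{(j,\sigma)}_{\min(i,T^{\ast})}$ are supermartingales that remain negative throughout the first phase with high probability. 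Since $A^{(j,\sigma)}_{0} = -n^{0.6}$ and failure at $T^{\ast}$ forces some $A^{(j,\sigma)}_{T^{\ast}} \geq 0$, it suffices to bound the probability that any of these $2d$ supermartingales rises by at least $n^{0.6}$.

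Establishing the supermartingale property amounts to checking, for $i < T^{\ast}$, that the expected one-step change of $\sigma(S^{(j)}_{i}-ns_{j}(i/n))$ is at most $E_{first}(i+1)-E_{first}(i)$. The trend hypothesis (\ref{equ: trend hypothesis}) gives $\EE[S^{(j)}_{i+1} - S^{(j)}_{i} \mid G_{i}] = -2Y^{(j)}_{i}/S^{(d-1)}_{i} \pm O(1/(dn-2i))$. Using $Y^{(j)}_{i} = S^{(j)}_{i}-S^{(j-1)}_{i} = ny_{j}(i/n) + O(E_{first}(i))$ and $S^{(d-1)}_{i} = ns_{d-1}(i/n) + O(E_{first}(i))$, valid since $i < T^{\ast}$, together with the lower bound $ns_{d-1}(i/n) \geq (dn-2i)/d$ from (\ref{equ: s_{d-1} bound}), this simplifies to $s_{j}'(i/n) + O(E_{first}(i)/(dn-2i))$. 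A second-order Taylor expansion shows $n[s_{j}((i+1)/n)-s_{j}(i/n)] = s_{j}'(i/n) + O(1/(dn-2i))$ after using the asymptotics (\ref{equ: y approx initial}) for $s_{j}''$. Since a direct computation of the derivative of $E_{first}$ yields $E_{first}(i+1)-E_{first}(i) \geq 8d\cdot E_{first}(i)/(dn-2i) + o(1)$, and $E_{first}(i) \geq n^{0.6} \gg 1$, all three error terms are absorbed, confirming the required drift.

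For concentration, observe that each $S^{(j)}_{i}$ changes by at most $2$ per step, $|n[s_{j}((i+1)/n)-s_{j}(i/n)]|$ is bounded by $2\|s_{j}'\|_{\infty}=O(1)$, and $E_{first}(i+1)-E_{first}(i)$ is at most $O(E_{first}(i_{trans})/(dn-2i_{trans})) = O(n^{0.64-1+1/(100d)}) = o(1)$ throughout the first phase. Hence $\tilde A^{(j,\sigma)}_{i}$ has one-step increments lying in $[-\eta,N]$ with $\eta, N = O(1)$. Applying Corollary \ref{Cor: lower bound mod} with $a = n^{0.6}$ and $m = i_{trans} = O(n)$ yields
\[
\Pr\!\left[\tilde A^{(j,\sigma)}_{i_{trans}} \geq 0\right] \leq e^{-\Omega(n^{0.2})} + e^{-\Omega(n^{0.6})},
\]
and a union bound over the $O(d)$ choices of $(j,\sigma)$ gives $\Pr[T^{\ast} \leq i_{trans}] = o(1)$, which is exactly the conclusion of the theorem.

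The main obstacle I expect is verifying that the exponent $4d$ in the definition of $E_{first}$ is large enough to dominate the implicit constants in the drift calculation: unpacking the trend hypothesis produces a relative drift error of the form $C_{1}E_{first}(i)/(dn-2i)$, where $C_{1}$ is a $d$-dependent constant that picks up contributions from perturbations in both the numerator $Y^{(j)}_{i}$ and the denominator $S^{(d-1)}_{i}$, and one must confirm $C_{1} \leq 8d$. A secondary bookkeeping issue is the uniform control of $s_{j}''$ over $[0, i_{trans}/n]$; the asymptotics (\ref{equ: y approx}) show that $s_{j}''(t)$ stays bounded by a polylogarithmic factor divided by $d-2t$, which is comfortably absorbed by $E_{first}(i)/(dn-2i)$ once $i \leq i_{trans}$. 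The rest of the argument is a careful but routine expansion.
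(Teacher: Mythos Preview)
Your proposal is correct and follows essentially the same route as the paper: stopping time, super/submartingale verification via the drift decomposition into (i) the $Y^{(j)}/S^{(d-1)}$ perturbation, (ii) the second-derivative correction, and (iii) the growth of $E_{first}$, followed by a Lipschitz bound and a martingale deviation inequality. The only cosmetic differences are that the paper applies the standard Hoeffding--Azuma inequality rather than Corollary~\ref{Cor: lower bound mod}, and it bounds $s_j''$ directly from (\ref{equ: s_{d-1} bound}) rather than invoking (\ref{equ: y approx initial}) or (\ref{equ: y approx}); your flagged constant check $C_1 \leq 8d$ is exactly the content of the paper's Lemma~\ref{Lemma: first phase sub-sup}, where the bound works out to $6d$ against the available $8d$.
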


Now we define two new random variables for each $j$:

\vspace{-5mm}

\begin{align*}
    S^{(j)+}_{i} &:= S^{(j)}_i - n s_j(i/n) - E_{first}(i)\\
    S^{(j)-}_{i} &:= S^{(j)}_i - n s_j(i/n) + E_{first}(i).
\end{align*}

Next, we introduce a stopping time $T$, defined as the first step $i \leq i_{trans}$ for which (\ref{equ: 1st Phase Theorem bound}) is {\it not} satisfied for some $j$; if (\ref{equ: 1st Phase Theorem bound}) always holds, then let $T = \infty$. Although this stopping time is not necessarily needed to prove Theorem \ref{Theorem: 1st phase bounds}, it does make some calculations easier, and moreover, a similar stopping time {\it will} be necessary for the following section; hence, this serves as a good warm-up. Let variable name $W$ be introduced to equip this stopping time to variable $S$, i.e.:

\vspace{-5mm}

\begin{align*}
    &W^{(j)+}_{i} := \begin{cases}
        &S^{(j)+}_{i}, i < T \\
        &S^{(j)+}_{T}, i \geq T
    \end{cases}
    &W^{(j)-}_i := \begin{cases}
        &S^{(j)-}_i, i < T \\
        &S^{(j)-}_T, i \geq T.
    \end{cases}
\end{align*}

\vspace{-4mm}

Note that $W^{(j)+}_{i}$ corresponds to the upper  boundary and $W^{(j)-}_i$ to the lower one in the sense that crossing the corresponding boundary will make the corresponding variable change signs; furthermore, the inequality of Theorem \ref{Theorem: 1st phase bounds} holds if and only if $W^{(j)+}_{i_{trans}} \leq 0$ and  $W^{(j)-}_{i_{trans}} \geq 0$ for each $j$. We now state our martingale Lemma:\\

\begin{lem} \label{Lemma: first phase sub-sup}
    Restricted to $i \leq i_{trans}$, for all $j, \  (W^{(j)-}_{i})_i$ is a submartingale and $(W^{(j)+}_{i})_i$ is a supermartingale.
\end{lem}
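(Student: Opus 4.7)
The plan is to fix $j$ and treat the two events $\{i \geq T\}$ and $\{i < T\}$ separately. When $i \geq T$ the variables $W^{(j)\pm}$ are constant by definition, so both one-step increments vanish and both martingale properties hold trivially. Since $T$ is a stopping time, the event $\{i < T\}$ is $G_i$-measurable, and on it $W^{(j)\pm}_{i+1} - W^{(j)\pm}_i = S^{(j)\pm}_{i+1} - S^{(j)\pm}_i$ while $|S^{(k)}_i - n s_k(i/n)| \leq E_{first}(i)$ for every $k$. Setting $\Delta E := E_{first}(i+1) - E_{first}(i)$, it therefore suffices to show on $\{i < T\}$ that
$$
\left|\EE\bigl[S^{(j)}_{i+1} - S^{(j)}_i \mid G_i\bigr] - n\bigl(s_j((i+1)/n) - s_j(i/n)\bigr)\right| \leq \Delta E,
$$
which simultaneously gives the super- and sub-martingale inequalities for $W^{(j)+}$ and $W^{(j)-}$.

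I would apply (\ref{equ: trend hypothesis}) to the expectation and Taylor's theorem (together with $s_j' = -2y_j/s_{d-1}$ from (\ref{equ: y deriv})) to the $s_j$ difference, so that the leading pieces $-2 Y^{(j)}_i/S^{(d-1)}_i$ and $-2 y_j(i/n)/s_{d-1}(i/n)$ nearly cancel. The discrepancy is then linearized via
$$
\frac{Y^{(j)}_i}{S^{(d-1)}_i} - \frac{y_j(i/n)}{s_{d-1}(i/n)} = \frac{Y^{(j)}_i - n y_j(i/n)}{S^{(d-1)}_i} + \frac{y_j(i/n)}{s_{d-1}(i/n)} \cdot \frac{n s_{d-1}(i/n) - S^{(d-1)}_i}{S^{(d-1)}_i}.
$$
On $\{i < T\}$ we have $|Y^{(j)}_i - n y_j(i/n)| \leq 2 E_{first}(i)$ and $|S^{(d-1)}_i - n s_{d-1}(i/n)| \leq E_{first}(i)$; combining these with $y_j \leq s_{d-1}$, $n s_{d-1}(i/n) \geq (dn-2i)/d$ from (\ref{equ: s_{d-1} bound}), and $E_{first}(i) = o(dn-2i)$ (so that $S^{(d-1)}_i = (1+o(1))\, n s_{d-1}(i/n)$), both pieces are bounded by $O(E_{first}(i)/(dn-2i))$, for a total of at most $(6d + o(1))\, E_{first}(i)/(dn-2i)$. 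The multiplicative slack from (\ref{equ: mult error}) and the Taylor remainder $|s_j''(i/n)|/n$ each add only $O(1/(dn-2i))$, which is dwarfed by the main error since $E_{first}(i) \geq n^{0.6}$.

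Finally, differentiating the definition of $E_{first}$ gives $E_{first}'(i) = 8d\, E_{first}(i)/(dn-2i)$; since $E_{first}$ is increasing and convex, $\Delta E \geq E_{first}'(i) = 8d\, E_{first}(i)/(dn-2i)$, which strictly dominates the $(6d+o(1))$ coefficient above and closes the inequality for all sufficiently large $n$. The main obstacle is the careful bookkeeping of these constants: one has to extract every error contribution (the two linearization pieces, the multiplicative slack in the trend hypothesis, and the Taylor remainder) and verify that the exponent $4d$ baked into $E_{first}$ — which controls its growth rate — is large enough that $8d$ strictly exceeds their combined coefficient, leaving room for the $o(1)$ terms.
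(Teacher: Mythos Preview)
Your proposal is correct and follows essentially the same approach as the paper's proof: handle the stopped case trivially, apply the trend hypothesis~(\ref{equ: trend hypothesis}) and a Taylor expansion of $s_j$, linearize the discrepancy $\frac{Y^{(j)}_i}{S^{(d-1)}_i} - \frac{y_j}{s_{d-1}}$ into two pieces each of size $O(E_{first}(i)/(dn-2i))$, and compare the resulting $(6d+o(1))$ coefficient against the $8d$ coming from $E_{first}'$. The only cosmetic differences are that you bound both directions at once via an absolute-value inequality (the paper does the submartingale side and declares the other analogous), you place $S^{(d-1)}_i$ rather than $n s_{d-1}$ in the denominator of the first linearization piece, and you invoke convexity of $E_{first}$ to get $\Delta E \geq E_{first}'(i)$ where the paper uses the mean value theorem to get $\Delta E = (1\pm o(1))\,8d\,E_{first}(i)/(dn-2i)$.
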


\begin{proof}
Here we just prove the first part of the Lemma; the second part follows from nearly identical calculations. Fix some arbitrary $i \leq i_{trans}$; we need to show that  $$\EE[W^{(j)-}_{i+1} - W^{(j)-}_{i} \mid G_i] \geq 0.$$ Also assume that $T \geq i+1$, else $W^{(j)-}_{i+1} - W^{(j)-}_i = 0$ and we are done; it follows that $W^{(j)-}_i = S^{(j)-}_i, W^{(j)-}_{i+1} = S^{(j)-}_{i+1}$, and (\ref{equ: 1st Phase Theorem bound}) holds for the fixed $i$. By (\ref{equ: trend hypothesis}) and (\ref{equ: y deriv}) and using Taylor's Theorem, we have, for some $\psi \in [i,i+1]$:

\begin{align*}
    \EE[S^{(j)-}_{i+1} - S^{(j)-}_i \mid G_i] &= \frac{-2 Y^{(j)}_i}{S^{(d-1)}_i} + O\left(\frac{1}{dn-2i}\right) + \frac{2y_j(i/n)}{s_{d-1}(i/n)} -\frac{d^2}{d\mu^2}\left(\frac{ns_j(\mu/n)}{2}\right)\Big|_{\mu = \psi} \\&+
    (E_{first}(i+1) - E_{first}(i)).
\end{align*}

We split the above expression (excluding $O\left(\frac{1}{dn-2i}\right)$) into three summands.

\begin{enumerate}
    \item Here we make use of the fact that $Y^{(j)}_i = S^{(j)}_i - S^{(j-1)}_{i}$ and $y_j(t) = s_j(t) - s_{j-1}(t)$. We have (putting $s_{j-1}$ and $S_i^{(-1)} = 0$):

\vspace{-5mm}
    
    \begin{align*}
        \frac{-2 Y^{(j)}_i}{S^{(d-1)}_i} + \frac{2y_j(i/n)}{s_{d-1}(i/n)} &= 
        \frac{-2 S^{(j)}_i + 2ns_j(i/n) + 2 S^{(j-1)}_i - 2ns_{j-1}(i/n)}{n s_{d-1}(i/n)} \\ & \ \ \ + 2Y^{(j)}_i \left(\frac{1}{n s_{d-1}(i/n)} - \frac{1}{S^{(d-1)}_i}\right) \\&\geq
        \frac{-4 E_{first}(i)}{ns_{d-1}(i/n)} + 2Y^{(j)}_i \left(\frac{1}{n s_{d-1}(i/n)} - \frac{1}{S^{(d-1)}_i}\right) \qquad \text{by (\ref{equ: 1st Phase Theorem bound}) and $i < T$} \\ & \geq 
        \frac{-4 E_{first}(i)}{ns_{d-1}(i/n)} - \frac{2 Y^{(j)}_{i} E_{first}(i)}{S^{(d-1)}_i (n s_{d-1}(i/n))} \qquad \text{by (\ref{equ: 1st Phase Theorem bound}) and $i < T$}\\ & \geq
        \frac{-6 E_{first}(i)}{n s_{d-1}(i/n)}.
    \end{align*}

    \item 
    \begin{align*}
        -\frac{d^2}{d\mu^2}\left(\frac{ns_j(\mu/n)}{2}\right)\Big|_{\mu = \psi} &= \frac{2}{n}\left(\frac{s_{d-1}(\psi/n)(y_{j-1}(\psi/n) - y_j(\psi/n))+y_j(\psi/n)y_{d-1}(\psi/n)}{(s_{d-1}(\psi/n))^3} \right) \\&=
    O\left(\frac{1}{dn-2i}\right) \qquad \text{by (\ref{equ: s_{d-1} bound})}.
    \end{align*}

    \item For some $\phi \in [i,i+1]$:

\vspace{-5mm}

    \begin{align}
        E_{first}(i+1) - E_{first}(i) &= \frac{dE_{first}(\mu)}{d\mu} \Big|_{\mu = \phi}  \nonumber \\&= 
        8d^{4d+1} n^{4d+0.6} \left(dn-2\phi\right)^{-4d-1} \nonumber \\&=
        (1 + o(1))\frac{8dE_{first}(i)}{dn - 2i}.
        \label{equ: E dif}
    \end{align}

\end{enumerate}

Now we put the three bounds together:

\begin{align*}
    \EE[Y_{i+1}^- - Y_i^- \mid G_i] &\geq \frac{7dE_{first}(i)}{dn - 2i} - \frac{6 E_{first}(i)}{n s_{d-1}(i/n)} + O\left(\frac{1}{dn-2i}\right)
     \\&\geq
    \frac{dE_{first}(i) + O(1)}{dn - 2i} \qquad \text{by (\ref{equ: s_{d-1} bound})} \\&\geq 0.
\end{align*}

\vspace{-10mm}

\end{proof}

Next, we need a Lipschitz condition on each of our variables. Note that $S^{(j)}_{i+1} - S^{(j)}_{i}$ is either $-2, -1$, or 0; also, one can quickly verify that $|s_j((i+1)/n) - s_j(i/n)| \leq \frac{2}{n}$ by (\ref{equ: y deriv}) and (\ref{equ: s_{d-1} bound}), and $|E_{first}(i+1) - E_{first}(i)| = o(1)$ by (\ref{equ: E dif}). Hence, we have, for all $i \leq i_{trans}$ and all $j$:

\begin{equation}
    \max\{|W^{(j)+}_{i+1} - W^{(j)+}_{i}|, |W^{(j)-}_{i+1} - W^{(j)-}_{i}| \} \leq 5.\label{equ: 1st phase Lipschitz}
\end{equation}

We conclude the proof of Theorem \ref{Theorem: 1st phase bounds} by noting that, by Lemma \ref{Lemma: first phase sub-sup} and (\ref{equ: 1st phase Lipschitz}), we can use the standard Hoeffding-Azuma inequality for martingales (e.g. Theorem 7.2.1 in \cite{probmethod}) to show that $\mathbb{P}[W^{(j)+}_{i_{trans}} > 0]$ and $\mathbb{P}[W^{(j)-}_{i_{trans}} < 0]$ are both $o(1)$. For example, for the variable $W^{(j)+}_{i}$ one would get $$
    \mathbb{P}[W^{(j)+}_{i_{trans}} > 0] \leq \exp \left\{ - \frac{n^{1.2}}{50 i_{trans}} \right\} = o(1).
$$

\section{Second Phase} \label{Section: Second}

The second phase is where the more sophisticated tools will be used, including the use of critical intervals, self-correcting estimates, and a more general martingale inequality. Furthermore, this phase is broken up into $d-1$ sub-phases, in relation to when each of the $d-1$ sequences $S^{(j)}$ (for $j \leq d-2$) terminate at 0. First, a few definitions: for all $k  \in \{0\} \cup [d-2]$, define $$
    i_{after}(k) := \left\lfloor \frac{dn}{2} - \ln(n)^{d-1.01-k} \right\rfloor.$$
These step values will govern the endpoints of the sub-phases: define for all $k \in \{0\} \cup [d-2]$: $$
    I_k := \begin{cases}
        [i_{trans} + 1, i_{after}(0)], & k = 0 \\
        [i_{after}(k-1) + 1, i_{after}(k)], & k > 0.
    \end{cases}$$

Next, for all $i,j,k$ such that $0 \leq j < d$, $0 \leq k < d-1$, and $i \in I_k$, define error functions $$
    E_{j,k}(i) = E_{j}(i) := 2^k \ln (n)^{0.05} (n s_{j}(i/n))^{0.7}.$$
Note that, by (\ref{equ: y approx}), we have

\begin{equation}
    E_j(i) = \Theta \left( \ln(n)^{-0.7d + 0.75 + 0.7j} \left(\frac{dn}{2} - i\right)^{0.7}\right).\label{equ: E approx}
\end{equation}

Finally, for any $r \in \RR_+$ and $\ell \in [d-2]$, define $$
    i(r,\ell) =  \frac{dn}{2} - \left(\frac{\ell! }{2(d-1)!}\right) r(\ln n)^{d-1-\ell}.$$

The following Theorem will be proved by induction over the $d-1$ sub-phases governed by the index $k$:

\begin{thm} \label{Theorem: 2nd phase bounds}

For each $k \in \{0\} \cup [d-2]$:

\begin{enumerate}
    
    \item With high probability, for all integers $j \in [0, d-1]$ and $i \in I_k$:

    \begin{equation}
        \left|S^{(j)}_{i} - n s_j\left(\frac{i}{n}\right)\right| \leq 4 E_j(i). \label{equ: 2nd phase Theorem bound}
    \end{equation}

    \item $S^{(k)}_{i_{after}(k)} = 0$ with high probability. Furthermore, for any $k+1$-tuple $\{r_0,r_1,\dots,r_k\} \in (\RR_+ \cup \{0\})^{k+1}$:$$
        \Pr \left( \bigcap_{\ell = 0}^{k} \left(S^{(\ell)}_{\lfloor i(r_{\ell},\ell) \rfloor} = 0 \right)\right) \to \exp \left\{-\sum_{\ell=0}^{k} r_{\ell} \right\}.$$

\end{enumerate}

\end{thm}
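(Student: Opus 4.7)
The plan is to prove Theorem \ref{Theorem: 2nd phase bounds} by induction on $k$, with parts 1 and 2 established together. For the base case $k=0$, I use Theorem \ref{Theorem: 1st phase bounds} as input: at step $i_{trans}$ we have $|S^{(j)}_{i_{trans}} - ns_j(i_{trans}/n)| \le E_{first}(i_{trans}) = O(n^{0.64})$, which is much smaller than $4E_j(i_{trans}) = \Theta(n^{0.7 - o(1)})$, so we enter phase $I_0$ comfortably inside the target $\pm 4E_j$ band. For the inductive step I assume both parts hold at level $k-1$: in particular, at the left endpoint of $I_k$ we have $S^{(k-1)} = 0$ (and this persists throughout $I_k$ since $S^{(k-1)}_i$ is non-increasing), and the bound (\ref{equ: 2nd phase Theorem bound}) at $i_{after}(k-1)$ supplies the initial conditions. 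The $2^k$ factor in $E_j$ is what ensures the slack inherited from the previous phase lands comfortably inside the new non-critical band at the start of $I_k$.

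For part 1 in phase $I_k$, I would set up, for each $j \in [k,d-1]$, boundary-violation variables
\begin{align*}
    S^{(j)+}_i &= S^{(j)}_i - ns_j(i/n) - 4E_j(i), \\
    S^{(j)-}_i &= S^{(j)}_i - ns_j(i/n) + 4E_j(i),
\end{align*}
introduce a stopping time $T$ at the first step in $I_k$ at which either inequality fails for some $j$, and wrap each process to freeze at $T$, as in Section \ref{Section: First}. The critical-interval twist is that the wrapped upper variable $W^{(j)+}_i$ is only required to be a supermartingale in the region $S^{(j)}_i - ns_j(i/n) \geq 3E_j(i)$. To verify this, expand (\ref{equ: trend hypothesis}) using $Y^{(j)}_i = S^{(j)}_i - S^{(j-1)}_i$ and (\ref{equ: y deriv}): the expected one-step change of $S^{(j)}_i - ns_j(i/n)$ has a leading \emph{self-correcting} piece
\[
    \frac{-2}{ns_{d-1}(i/n)}\Bigl[(S^{(j)}_i - ns_j(i/n)) - (S^{(j-1)}_i - ns_{j-1}(i/n))\Bigr],
\]
plus subleading pieces from the $O(1/(dn-2i))$ error, from expanding $1/S^{(d-1)}_i$, and from the second derivative of $ns_j$. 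In the critical region this leading piece is at most $\tfrac{-6E_j(i)}{ns_{d-1}(i/n)}$, while $-\tfrac{d}{di}(4E_j(i)) \sim \tfrac{5.6E_j(i)}{ns_{d-1}(i/n)}$, leaving a net negative drift of order $\tfrac{-0.4E_j(i)}{ns_{d-1}(i/n)}$; the $S^{(j-1)} - ns_{j-1}$ contribution is absorbed using $E_{j-1}/E_j = \Theta(\ln(n)^{-0.7})$ (and for the case $j=k$ using $S^{(k-1)} = 0$ together with $ns_{k-1}(i/n) = o(E_k(i))$ on $I_k$). With bounded one-step differences, Corollary \ref{Cor: lower bound mod} then yields $\Pr[W^{(j)+}_{i_{after}(k)} > 0] = o(1/d)$; an analogous submartingale argument bounds $\Pr[W^{(j)-}_{i_{after}(k)} < 0]$ and a union bound completes part 1.

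For part 2, the ``$S^{(k)}_{i_{after}(k)} = 0$ with high probability'' statement can be deduced from the distributional claim applied at $r = \ln\ln n$: since $i(\ln\ln n, k) < i_{after}(k)$ for large $n$ and $S^{(k)}_i$ is non-increasing, we have $\Pr[S^{(k)}_{i_{after}(k)} = 0] \ge \Pr[S^{(k)}_{\lfloor i(\ln\ln n, k)\rfloor} = 0] \to 1$. For the distributional claim itself I would factor
\[
    \Pr\Bigl[\bigcap_{\ell=0}^{k} \bigl(S^{(\ell)}_{\lfloor i(r_\ell,\ell)\rfloor} = 0\bigr)\Bigr]
\]
as the inductive hypothesis at level $k-1$ times the conditional probability that $S^{(k)}_{\lfloor i(r_k, k)\rfloor} = 0$ given the earlier hitting events. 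Part 1 controls $S^{(k)}_i$ near its expectation throughout the bulk of $I_k$; at steps where $S^{(k)}_i = a$ is small, the chance the next edge decreases $S^{(k)}$ by one is $\tfrac{2a}{ns_{d-1}(i/n)}(1 + o(1)) \sim \tfrac{2a}{dn - 2i}$, and integrating these rates over the last portion of $I_k$ produces the $e^{-r_k}$ factor. The pairing argument promised for Section \ref{Section: Final} is presumably what turns this heuristic into a coupling with a uniformly random ordering of the remaining ``slots''.

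The main obstacle I anticipate is the constant-chasing in part 1: the self-correction margin $\tfrac{0.4 E_j(i)}{ns_{d-1}(i/n)}$ must dominate every secondary error uniformly over all of $I_k$, and this is tightest at the right endpoint where $dn - 2i$ shrinks to $\Theta(\ln(n)^{d-1.01-k})$. A second delicate point is the asymptotic independence required for part 2: the events $\{S^{(\ell)}_{\lfloor i(r_\ell,\ell)\rfloor}=0\}$ occur at drastically different time scales (distance $\Theta(\ln(n)^{d-1-\ell})$ from $dn/2$), so one must verify that conditioning on hitting events at much earlier times does not perturb the local dynamics of $S^{(k)}$ near its own stopping time---this is precisely the role of the pairing argument in Section \ref{Section: Final}.
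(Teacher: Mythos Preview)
Your overall architecture matches the paper's: induction on $k$, critical intervals with self-correcting drift for part~1, and factoring plus a ``uniform slots'' coupling for part~2. Two points need correction.

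First, in part~1 you cannot apply Corollary~\ref{Cor: lower bound mod} once to $W^{(j)+}$ over all of $I_k$: the supermartingale property only holds while $W^{(j)+} \ge -E_j(i)$, and the process may enter and leave this band many times. The paper decomposes the bad event according to the \emph{last entry time} $\ell$ into the critical band (events $\mathcal{E}^{(j)\pm}_\ell$, $\mathcal{H}^{(j)\pm}_\ell$), introduces a second stopping time that freezes the process if it exits the band after time $\ell$, and applies the corollary separately for each $\ell$ with $a \approx E_j(\ell)$ and $m = i_{after}(k) - \ell$. The resulting union bound is over $\ell$ as well as $j$, and it only converges because of the \emph{asymmetric} Lipschitz bound $-3 < \Delta W^{(j)\pm} < \ln(n)^{-d+1.06+j}$ of Lemma~\ref{Lemma:bound hyp}: it is the tiny positive increment $\eta$ that makes the exponent $a^2/(3\eta N m)$ large enough to survive summation over all $\ell \in I_k$. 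Your phrase ``bounded one-step differences'' hides both of these ingredients.

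Second, your deduction of $S^{(k)}_{i_{after}(k)} = 0$ from the distributional claim at $r = \ln\ln n$ is backwards. Large $r$ places $i(r,k)$ \emph{earlier} in the process, and the claim gives $\Pr\bigl[S^{(k)}_{\lfloor i(r,k)\rfloor} = 0\bigr] \to e^{-r}$; with $r = \ln\ln n$ this would tend to $0$, not $1$ (and in any case the claim is stated only for fixed $r$). The correct route---which the paper takes---is the opposite limit: for every fixed $r > 0$ one has $i(r,k) < i_{after}(k)$ eventually, hence $\Pr\bigl[S^{(k)}_{i_{after}(k)} = 0\bigr] \ge \Pr\bigl[S^{(k)}_{\lfloor i(r,k)\rfloor} = 0\bigr] \to e^{-r}$, and then let $r \downarrow 0$.
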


In the end, it is only the second statement with $k = d-2$ that matters for proving Theorem \ref{Theorem: main}. We make the connection here:

\begin{proof}[Proof of Theorem \ref{Theorem: main} from Theorem \ref{Theorem: 2nd phase bounds}] 

First, note that $S^{(\ell)}_{\lfloor i(r_{\ell},\ell) \rfloor} = 0$ is the same as $T_{\ell} \leq i(r_{\ell},\ell)$, hence by Theorem \ref{Theorem: 2nd phase bounds}: $$
    \mathbb{P} \left( \bigcap_{\ell = 0}^{d-2} \left(T_{\ell} \leq i(r_{\ell},\ell)\right)\right) \to \exp \left\{-\sum_{\ell=0}^{d-2} r_{\ell} \right\}.$$
Using the Principle of Inclusion-Exclusion plus a simple limiting argument, one can derive $$\mathbb{P} \left( \bigcap_{\ell = 0}^{d-2} \left(\frac{(d-1)!(dn-2T_{\ell})}{\ell!(\ln(n))^{d-1-\ell}} \leq r_{\ell}\right)\right) = \mathbb{P} \left( \bigcap_{\ell = 0}^{d-2} \left(T_{\ell} \geq i(r_{\ell},\ell)\right)\right)  \to \prod_{\ell=0}^{d-2} (1 - e^{-r_{\ell}}),$$ hence the $d-1$-dimensional random vector with entries $V_{n}^{(\ell)} = \frac{(d-1)!(dn-2T_{\ell})}{\ell!(\ln(n))^{d-1-\ell}}$ converges in distribution to the product of $d-1$ independent exponential variables of mean 1.
    
\end{proof}

The rest of this section is for proving the first statement of Theorem \ref{Theorem: 2nd phase bounds} (for some fixed $k$ using induction), and Section \ref{Section: Final} will be for proving the second statement (again, for some fixed $k$ using induction, assuming the first statement holds for the same $k$). Hence, for the rest of the paper we will fix some $k \in \{0\} \cup [d-2]$. \\

First, we note that (\ref{equ: 2nd phase Theorem bound}) holds w.h.p. for all $j < k$ by a simple argument: by induction on the second statement of Theorem \ref{Theorem: 2nd phase bounds}, w.h.p. if $i \in I_k$ then $S_{i}^{(j)} = 0$. By (\ref{equ: y approx}) and by definition of $E_j(i)$, if $i \in I_k$ then $ns_{j}(i/n) \ll E_{j}(i)$, completing the argument. 

Next, we prove that (\ref{equ: 2nd phase Theorem bound}) holds for $j = d-1$ if it holds for all other values of $j$: by combining (\ref{equ: Y sum}) and (\ref{equ: y explicit}), we have

\vspace{-5mm}

\begin{align*}
    \left|S^{(d-1)}_{i} - n s_{d-1}\left(\frac{i}{n}\right)\right| &= \left| \sum_{j=0}^{d-2}\left(S^{(j)}_{i} - n s_{j}\left(\frac{i}{n}\right)\right) \right| \\&\leq
    \sum_{j=0}^{d-2} 4E_j(i) \qquad \text{by (\ref{equ: 2nd phase Theorem bound}) for $j \leq d-2$} \\&<
    4 E_{d-1}(i) \qquad \text{by (\ref{equ: E approx}).}
\end{align*}

Hence, for the rest of this section, we need to show the first statement of Theorem \ref{Theorem: 2nd phase bounds} for $j \in [k, d-2]$. From now on we always assume $j$ to be in this range. We will {\it also} assume that, for all $\lambda < k$, $S_{i}^{(\lambda)} = 0$ if $i \in I_k$ (which holds w.h.p. from above).

In this section we will make use of so-called {\it critical intervals}, ranges of possible values for $S^{(j)}_i$ in which we apply a martingale argument. The lower critical interval will be $$[n s_j(i/n) - 4E_j(i), ns_j(i/n) - 3E_j(i)],$$ and the upper critical interval will be $$[n s_j(i/n) + 3E_j(i), ns_j(i/n) + 4E_j(i)].$$ Our goal is to show that w.h.p. $S^{(j)}_i$ does not cross either critical interval; however, we first need to show that $S^{(j)}_i$ sits between the critical intervals at the beginning of the phase (this is the reason why $E_j(i)$ has the $2^k$ factor; it makes a sudden jump in size between phases to accomodate a new martingale process), which is the statement of our first Lemma of this section:

\begin{lem} \label{Lemma: start of phase}

W.h.p., for all $j \in [k,d-2]$  (putting $i_{after}(-1) = i_{trans}$ for convenience of notation):$$ \left|S^{(j)}_{i_{after}(k-1) + 1} - n s_j \left(\frac{i_{after}(k-1)+1}{n}\right) \right| < 3E_j(i_{after}(k-1)+1).
$$
    
\end{lem}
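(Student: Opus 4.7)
The plan is to split into two cases according to whether $k=0$ or $k \geq 1$, in each case using the estimate available at the final step of the preceding phase and propagating it forward by a single step. The gap between the coefficient $4$ (available at the end of the preceding phase) and the coefficient $3$ (required here) is designed precisely to absorb the $O(1)$ deterministic fluctuation in $S^{(j)}$ from one step to the next, together with any negligible change in the error function. In both cases the argument is deterministic once one conditions on the high-probability event in the appropriate theorem from the preceding phase.

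For the base case $k = 0$, I would invoke Theorem \ref{Theorem: 1st phase bounds} at step $i_{trans}$ to obtain $|S^{(j)}_{i_{trans}} - ns_j(i_{trans}/n)| \leq E_{first}(i_{trans})$. Plugging in $dn - 2i_{trans} = \Theta(n^{1-1/(100d)})$ yields $E_{first}(i_{trans}) = \Theta(n^{0.64})$. On the other hand, by (\ref{equ: y approx}), $ns_j(i_{trans}/n) = \Theta(\ln(n)^{j-d+1} n^{1-1/(100d)})$, so $E_{j,0}(i_{trans}+1) = \Theta(\ln(n)^{0.05-0.7(d-1-j)} n^{0.7 - 7/(1000d)})$. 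Since $0.7 - 7/(1000d) > 0.64$ for every $d \geq 1$, $E_{j,0}(i_{trans}+1)$ dominates $E_{first}(i_{trans})$ by a polynomial factor in $n$, so after accounting for the $O(1)$ change coming from $|S^{(j)}_{i_{trans}+1} - S^{(j)}_{i_{trans}}| \leq 2$ and $|ns_j((i_{trans}+1)/n) - ns_j(i_{trans}/n)| \leq 2$ (valid by (\ref{equ: y deriv}) and (\ref{equ: s_{d-1} bound})), the required bound holds for all large enough $n$.

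For the inductive case $k \geq 1$, I would invoke the first statement of Theorem \ref{Theorem: 2nd phase bounds} at parameter $k-1$ (the induction hypothesis) at the terminal step $i_{after}(k-1) \in I_{k-1}$ of the preceding sub-phase. The key observation is that the factor $2^k$ built into $E_{j,k}$ gives $E_{j,k}(i) = 2 E_{j,k-1}(i)$ at any common step $i$, so
\[
\left|S^{(j)}_{i_{after}(k-1)} - ns_j(i_{after}(k-1)/n)\right| \leq 4 E_{j,k-1}(i_{after}(k-1)) = 2 E_{j,k}(i_{after}(k-1)).
\]
By (\ref{equ: E approx}) with $dn/2 - i_{after}(k-1) = \ln(n)^{d-0.01-k}$, one computes $E_{j,k}(i_{after}(k-1)) = \Theta(\ln(n)^{0.743 + 0.7(j-k)})$, which tends to infinity for $j \geq k$. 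Propagating one step then adds at most $O(1)$ to $|S^{(j)} - ns_j|$ and perturbs $E_{j,k}$ by $o(1)$, so $2 E_{j,k}(i_{after}(k-1)) + O(1) < 3 E_{j,k}(i_{after}(k-1)+1)$ for large $n$.

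There is no genuine obstacle here; the lemma is a bookkeeping initialization for the critical-interval argument to follow. The only thing requiring care is the numerical alignment of the constants. In particular, the factor-of-two jump between consecutive sub-phases built into the very definition $E_{j,k}(i) = 2^k \ln(n)^{0.05}(ns_j(i/n))^{0.7}$ is exactly what is needed to pass from the end-of-phase bound $4E_{j,k-1}$ to the start-of-phase bound $3E_{j,k}$ with room to spare for the single-step correction; and in the base case the orders of $E_{first}(i_{trans})$ and $E_{j,0}(i_{trans}+1)$ differ by a polynomial factor in $n$, so the comparison is even more comfortable there.
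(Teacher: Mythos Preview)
Your proposal is correct and follows essentially the same approach as the paper: split into the base case $k=0$ (compare $E_{first}(i_{trans})=\Theta(n^{0.64})$ against $E_{j,0}(i_{trans}+1)=\omega(n^{0.69})$ via Theorem~\ref{Theorem: 1st phase bounds}) and the inductive case $k\geq 1$ (use the factor-of-two jump $E_{j,k}=2E_{j,k-1}$ to convert the bound $4E_{j,k-1}$ from the induction hypothesis into $2E_{j,k}$, then absorb the $O(1)$ single-step fluctuation). Your write-up is in fact more explicit about the numerical orders than the paper's own proof.
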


\begin{proof}
     First, recall that $S^{(j)}_{i+1} - S^{(j)}_{i} \in \{-2,-1,0\}$ and $|n s_{j}((i+1)/n) - n s_{j}(i/n)| \leq 2$ for any $i$ and $j$ (see paragraph above (\ref{equ: 1st phase Lipschitz})). Second, consider the change in the bound itself between $i_{after}(k-1)$ and $i_{after}(k-1)+1$: by definitions of $i_{trans}, E_{first}, E_j$, and by (\ref{equ: E approx}), we have $1 \ll E_{first}(i_{trans}) = \Theta(n^{0.64}), E_{j}(i_{trans}+1) = \omega(n^{0.69})$, and $1 \ll E_{j}(i_{after}(k-1)) \approx \frac{1}{2}(E_{j}(i_{after}(k-1)+1)$ for $k > 0$. Hence, by induction on the first statement of Theorem \ref{Theorem: 2nd phase bounds} and by Theorem \ref{Theorem: 1st phase bounds}, the statement of the Lemma follows.

\vspace{-4mm}

\end{proof}

Next, like in Section \ref{Section: First}, we define two new random variables for each $j$ and $i \in I_k$:

\vspace{-5mm}

\begin{align*}
    S^{(j)+}_{i} &:= S^{(j)}_i - n s_j(i/n) - 4 E_j(i)\\
    S^{(j)-}_{i} &:= S^{(j)}_i - n s_j(i/n) + 4 E_j(i).
\end{align*}

We also re-introduce the stopping time $T$, now defined as the first step $i \in I_k$ for which (\ref{equ: 2nd phase Theorem bound}) is {\it not} satisfied for some $j$; if (\ref{equ: 2nd phase Theorem bound}) always holds, then let $T = \infty$. Let variable name $W$ be introduced to equip this stopping time to variable $S$, i.e.:

\vspace{-5mm}

\begin{align*}
    &W^{(j)+}_{i} := \begin{cases}
        &S^{(j)+}_{i}, i < T \\
        &S^{(j)+}_{T}, i \geq T
    \end{cases}
    &W^{(j)-}_i := \begin{cases}
        &S^{(j)-}_i, i < T \\
        &S^{(j)-}_T, i \geq T.
    \end{cases}
\end{align*}

\vspace{-4mm}

Note that $W^{(j)+}_{i}$ corresponds to the upper critical interval, and $W^{(j)-}_i$ to the lower one. Furthermore, the inequality of Theorem \ref{Theorem: 2nd phase bounds} holds if and only if $W^{(j)+}_{i_{after}(k)} \leq 0$ and  $W^{(j)-}_{i_{after}(k)} \geq 0$ for each $j$ (here we must make use of our assumption that $S_{i}^{(\lambda)} = 0$ for all $\lambda < k$). The next Lemma states that, within their respective critical intervals, they are a supermartingale and submartingale respectively:\\

\vspace{-2mm}

\begin{lem} \label{Lemma:sub-sup}
    For all $i \in I_k$ and for all $j \in [k,d-2], \ \EE[W^{(j)-}_{i+1} - W^{(j)-}_i \mid G_i] \geq 0$ whenever $W^{(j)-}_i \leq  E_j(i)$, and $\EE[W^{(j)+}_{i+1} - W^{(j)+}_i \mid G_i] \leq 0$ whenever $W^{(j)+}_i \geq  -E_j(i)$.
\end{lem}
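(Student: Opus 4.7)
The plan mirrors the argument of Lemma \ref{Lemma: first phase sub-sup}, but now the critical-interval hypothesis supplies the drift (rather than a global envelope). Fix $i \in I_k$ with $i < T$, so that $W^{(j)\pm}_i = S^{(j)\pm}_i$ and the one-step differences agree. I would split
\[\EE[S^{(j)-}_{i+1} - S^{(j)-}_i \mid G_i]\]
into three summands: (i) $\EE[S^{(j)}_{i+1} - S^{(j)}_i \mid G_i] = -2Y^{(j)}_i/S^{(d-1)}_i \pm O((dn-2i)^{-1})$ from (\ref{equ: trend hypothesis}); (ii) $ns_j(i/n) - ns_j((i+1)/n) = 2y_j(i/n)/s_{d-1}(i/n)$ plus a Taylor remainder (using (\ref{equ: y deriv})); and (iii) $4(E_j(i+1)-E_j(i))$, which by the chain rule together with (\ref{equ: y deriv}) and the asymptotic $y_j \sim s_j$ of (\ref{equ: y approx initial}) equals $-(5.6+o(1))E_j(i)/(ns_{d-1}(i/n))$ (the coefficient $5.6 = 2\cdot 4\cdot 0.7$ is the buffer coefficient $4$ times twice the exponent $0.7$ in the definition of $E_j$).

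Writing $\hat{S}_\ell := ns_\ell(i/n)$ and $\hat{Y}_j := ny_j(i/n)$, I would combine (i) and (ii) via the algebraic identity
\[\frac{-2Y^{(j)}_i}{S^{(d-1)}_i} + \frac{2\hat{Y}_j}{\hat{S}_{d-1}} = \frac{2\bigl((\hat{S}_j - S^{(j)}_i) - (\hat{S}_{j-1} - S^{(j-1)}_i)\bigr)}{S^{(d-1)}_i} + 2\hat{Y}_j\left(\frac{1}{\hat{S}_{d-1}} - \frac{1}{S^{(d-1)}_i}\right).\]
The hypothesis $W^{(j)-}_i \leq E_j(i)$ is equivalent to $\hat{S}_j - S^{(j)}_i \in [3E_j(i),4E_j(i)]$. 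For the $(\hat{S}_{j-1} - S^{(j-1)}_i)$ term I would invoke either the stopping-time bound $|\hat{S}_{j-1} - S^{(j-1)}_i| \leq 4E_{j-1}(i)$ when $j-1 \geq k$, or, at the phase boundary $j = k$, the inductive hypothesis $S^{(k-1)}_i \equiv 0$ on $I_k$; either way the contribution is $o(E_j(i))$ by (\ref{equ: y approx}) and (\ref{equ: E approx}), which give $s_{j-1}/s_j = \Theta(1/\ln n)$ throughout $I_k$. The second correction on the right is absorbed similarly using $|S^{(d-1)}_i - \hat{S}_{d-1}| \leq 4E_{d-1}(i)$ and $\hat{Y}_j/\hat{S}_{d-1} = \Theta(\ln(n)^{-(d-1-j)})$. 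Combining these, (i)$+$(ii) is at least $(6 - o(1))E_j(i)/(ns_{d-1}(i/n))$.

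Adding (iii) then yields
\[\EE[S^{(j)-}_{i+1} - S^{(j)-}_i \mid G_i] \geq \frac{(0.4 - o(1))E_j(i)}{ns_{d-1}(i/n)} - O\!\left(\frac{1}{dn-2i}\right) > 0,\]
where the final inequality holds because $E_j(i) \to \infty$ on $I_k$ by (\ref{equ: E approx}) and so swamps the remainders. The upper-boundary analysis is symmetric: $W^{(j)+}_i \geq -E_j(i)$ gives $S^{(j)}_i - \hat{S}_j \geq 3E_j(i)$, which produces drift $\leq -(6-o(1))E_j(i)/(ns_{d-1})$, while $-4(E_j(i+1)-E_j(i))$ contributes only $+(5.6+o(1))E_j(i)/(ns_{d-1})$, leaving net drift $\leq -(0.4-o(1))E_j(i)/(ns_{d-1}) < 0$.

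The main obstacle is the tight coefficient arithmetic ``$6 > 5.6$'', which is what makes self-correction work at all: the buffer of width $E_j(i)$ between the critical-region threshold $3E_j(i)$ and the failure threshold $4E_j(i)$ must produce drift strictly larger than the intrinsic shrinkage of $E_j(i)$. Had the exponent in the definition of $E_j$ been chosen above $0.75$, this comparison would fail with the same buffer widths. A secondary task is verifying that all the lower-order errors -- the $O((dn-2i)^{-1})$ from (\ref{equ: trend hypothesis}), the Taylor remainder, and the $1/S^{(d-1)}_i$ correction -- are uniformly $o(E_j(i)/(ns_{d-1}(i/n)))$ on $I_k$; this follows from (\ref{equ: y approx})--(\ref{equ: E approx}) by careful accounting of powers of $(dn-2i)$ and $\ln n$.
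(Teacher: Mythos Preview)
Your proposal is correct and follows essentially the same route as the paper's proof: the same three-summand decomposition, the same use of the critical-interval hypothesis $\hat{S}_j - S^{(j)}_i \geq 3E_j(i)$ to extract drift $\approx 6E_j(i)/(ns_{d-1})$, the same chain-rule computation giving $4(E_j(i+1)-E_j(i)) \approx -5.6\,E_j(i)/(ns_{d-1})$, and the same comparison $6 > 5.6$. Your algebraic identity places $S^{(d-1)}_i$ rather than $ns_{d-1}(i/n)$ in the first denominator (and $\hat Y_j$ rather than $Y^{(j)}_i$ in the correction factor), which is an equally valid variant; and you are more explicit than the paper about the boundary case $j=k$, where the $(j-1)$-bound comes from the inductive hypothesis $S^{(k-1)}_i=0$ rather than from the stopping time.
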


\begin{proof}

Here we just prove the first part of the Lemma; the second part follows from nearly identical calculations. By the same logic as in the proof of Lemma \ref{Lemma: first phase sub-sup} we work with $S^{(j)-}$ instead of $W^{(j)-}$ and assume that (\ref{equ: 2nd phase Theorem bound}) holds for all $j$. We also have the same expected change as in Lemma \ref{Lemma: first phase sub-sup}, except with $E_{first}(i)$ replaced with $4E_j(i)$:

\vspace{-6mm}

\begin{align*}
    \EE[S^{(j)-}_{i+1} - S^{(j)-}_i \mid G_i] &= \frac{-2 Y^{(j)}_i}{S^{(d-1)}_i} + O\left(\frac{1}{dn-2i}\right) + \frac{2y_j(i/n)}{s_{d-1}(i/n)} -\frac{d^2}{d\mu^2}\left(\frac{ns_j(\mu/n)}{2}\right)\Big|_{\mu = \psi} \\&+
    4(E_{j}(i+1) - E_j(i)).
\end{align*}

\vspace{-4mm}

We split the above expression (excluding $O\left(\frac{1}{dn-2i}\right)$) into three summands, assuming \newline $S^{(j)-}_i \leq E_j(i) \ \Longleftrightarrow \ S^{(j)}_i - ns_j(i/n) \leq  - 3E_j(i)$ (for convenience, for the case $j = 0$, we put $S_i^{(j-1)}, s_{j-1}$, and $E_{j-1}$ all equal to 0):

\begin{enumerate}
    \item  \begin{align*}
        \frac{-2 Y^{(j)}_i}{S^{(d-1)}_i} + \frac{2y_j(i/n)}{s_{d-1}(i/n)} &= \frac{-2 S^{(j)}_i + 2ns_j(i/n) + 2 S^{(j-1)}_i - 2ns_{j-1}(i/n)}{n s_{d-1}(i/n)} \\ & \ \ \ + 2Y^{(j)}_i \left(\frac{1}{n s_{d-1}(i/n)} - \frac{1}{S^{(d-1)}_i}\right) \\&\geq 
        \frac{6 E_j(i) - 8 E_{j-1}(i)}{ns_{d-1}(i/n)} - \frac{8 S^{(j)}_{i} E_{d-1}(i)}{S^{(d-1)}_i (n s_{d-1}(i/n))} \nonumber \qquad \text{by (\ref{equ: 2nd phase Theorem bound})} \\&\geq
        \frac{5.9 E_j(i)}{ns_{d-1}(i/n)} - \frac{9 S^{(j)}_{i} E_{d-1}(i)}{(n s_{d-1}(i/n))^2} \nonumber \qquad \text{by (\ref{equ: E approx}), (\ref{equ: 2nd phase Theorem bound}), (\ref{equ: y approx}), and $i \leq i_{after}(k)$} \\&\geq
        \frac{5.9 E_j(i)}{ns_{d-1}(i/n)} - \frac{9 (n s_{j}(i/n)) E_{d-1}(i)}{(n s_{d-1}(i/n))^2} - \frac{36 E_{j}(i) E_{d-1}(i)}{(n s_{d-1}(i/n))^2} \qquad \text{by (\ref{equ: 2nd phase Theorem bound})} \\&=
        \left(\frac{E_j(i)}{n s_{d-1}(i/n)}\right)\left(5.9 - 
 9\left(\frac{s_{j}(i/n)}{s_{d-1}(i/n)}\right)^{0.3} - \frac{36*2^k \ln (n)^{0.05}}{(n s_{d-1}(i/n))^{0.3}}\right) \\&\geq
        \frac{5.8 E_j(i)}{n s_{d-1}(i/n)} \qquad \text{by $i \leq i_{after}(k)$ and (\ref{equ: y approx})}.
    \end{align*}

    \item Just as in the proof of Lemma \ref{Lemma: first phase sub-sup}:$$
        -\frac{d^2}{d\mu^2}\left(\frac{ns_j(\mu/n)}{2}\right)\Big|_{\mu = \psi} = 
        O\left(\frac{1}{dn-2i}\right). $$

\item  
    \begin{align}
        4(E_j(i+1) - E_j(i)) &=  4 \frac{dE_j(\mu)}{d\mu}\Big|_{\mu = \phi} \qquad \text{for some $\phi \in [i, i+1]$} \nonumber \\&=
    (4)(2^k) \ln (n)^{0.05} \left(\frac{0.7}{(ns_j(\phi/n))^{0.3}}\right)\left(\frac{-2y_j(\phi/n)}{s_{d-1}(\phi/n)}\right) \qquad \text{by (\ref{equ: y deriv})} \nonumber \\& =
    (4 + o(1))(2^k) \ln (n)^{0.05} \left(\frac{0.7}{(ns_j(\phi/n))^{0.3}}\right)\left(\frac{-2s_j(\phi/n)}{s_{d-1}(\phi/n)}\right) \ \ \ \ \text{by (\ref{equ: y approx initial})} \nonumber \\&=
        \frac{-(5.6 + o(1)) E_j(\phi)}{n s_{d-1}(\phi/n)} = \frac{-(5.6 + o(1))E_j(i)}{n s_{d-1}(i/n)}. \label{equ: E dif second}
    \end{align}

\end{enumerate}

Now we put the above bounds together (using (\ref{equ: y approx}), (\ref{equ: E approx}), and $i \leq i_{after}(k) \leq i_{after}(j)$): 

\vspace{-5mm}

\begin{align*}
    \EE[S_{i+1}^{(j)-} - S_{i}^{(j)-} \mid G_i] &\geq \frac{0.01E_j(i)}{n s_{d-1}(i/n)} + O\left(\frac{1}{dn-2i}\right) 
    \\&\geq 0.
\end{align*}

\vspace{-10mm}

\end{proof}

We introduce the next Lemma to get sufficiently small bounds on the one-step changes in each time step (this is known as the ``bounded hypothesis" from \cite{DiffQMethod}):

\begin{lem} \label{Lemma:bound hyp}
    For all $i \in I_k$ and all $j \in [k,d-2]$,$$
       -3 < W^{(j) \xi}_{i+1} - W^{(j)\xi}_i < \ln(n)^{-d+1.06+j}$$ where ``$\xi$" can be either ``$+$" or ``$-$".
\end{lem}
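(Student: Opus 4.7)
The plan is to decompose the one-step increment of $W^{(j)\xi}_i$ into three natural pieces and bound each. When $i \geq T$ the increment equals zero, so the bounds are trivial; hence assume $i < T$, in which case $W^{(j)\xi}_{i+1} - W^{(j)\xi}_i$ equals
\begin{equation*}
\bigl(S^{(j)}_{i+1} - S^{(j)}_i\bigr) \;-\; n\bigl(s_j((i+1)/n) - s_j(i/n)\bigr) \;\mp\; 4\bigl(E_j(i+1) - E_j(i)\bigr),
\end{equation*}
where the upper sign is for $\xi = +$ and the lower for $\xi = -$.

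The first summand lies in $\{-2,-1,0\}$. For the second, the mean value theorem together with (\ref{equ: y deriv}) gives $n(s_j((i+1)/n) - s_j(i/n)) = -2y_j(\psi/n)/s_{d-1}(\psi/n)$ for some $\psi\in[i,i+1]$; then $y_j\sim s_j$ combined with the estimates in (\ref{equ: y approx}) for both $ns_j$ and $ns_{d-1}$ shows that this quantity is negative with magnitude $\Theta(\ln(n)^{-d+1+j})$. For the third, equation (\ref{equ: E dif second}) already gives $|4(E_j(i+1) - E_j(i))| = \Theta(E_j(i)/(ns_{d-1}(i/n)))$; plugging in (\ref{equ: E approx}), the estimate $ns_{d-1}(i/n) = \Theta(dn-2i)$, and the lower bound $dn/2 - i \geq \ln(n)^{d-1.01-k}$ valid throughout $I_k$, yields $|4(E_j(i+1) - E_j(i))| = O(\ln(n)^{1.053-d+0.7j+0.3k})$. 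Since $k \leq j$ by hypothesis, the exponent is at most $1.053-d+j$.

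Both $s_j$ and $E_j$ are positive and decreasing in $i$, so the second and third summands in the display above are negative as written. In $W^{(j)+}_{i+1} - W^{(j)+}_i$ both are subtracted and so contribute \emph{positively} of sizes $\Theta(\ln(n)^{-d+1+j})$ and $O(\ln(n)^{1.053-d+j})$; combined with the random piece in $[-2,0]$, the increment is at least $-2 > -3$ and at most $O(\ln(n)^{1.053-d+j}) = o(\ln(n)^{-d+1.06+j})$. In $W^{(j)-}_{i+1} - W^{(j)-}_i$ the $E_j$ correction enters with the opposite sign, contributing $-O(\ln(n)^{1.053-d+j})$, while the $s_j$ correction is still positive of size $\Theta(\ln(n)^{-d+1+j})$; the lower bound therefore degrades to $-2 - o(1)$, still above $-3$, and the upper bound is at most $\Theta(\ln(n)^{-d+1+j}) < \ln(n)^{-d+1.06+j}$.

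The only real difficulty is the exponent bookkeeping for $|4(E_j(i+1) - E_j(i))|$: getting from $1.053-d+0.7j+0.3k$ down to something strictly less than $1.06-d+j$ requires exactly the hypothesis $k\leq j$, since otherwise the $0.3k$ contribution would push the exponent past the target. This is precisely where the structural assumption $j \in [k,d-2]$ enters.
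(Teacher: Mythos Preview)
Your proof is correct and follows essentially the same approach as the paper's: decompose the increment into the random piece $S^{(j)}_{i+1}-S^{(j)}_i\in\{-2,-1,0\}$ and the deterministic $s_j$ and $E_j$ corrections, then show the latter are $o(\ln(n)^{-d+1.06+j})$. The only cosmetic differences are that the paper bounds the deterministic part in absolute value via the triangle inequality rather than tracking signs case by case, and that the paper invokes $i\le i_{after}(j)$ directly (since $k\le j$ implies $i_{after}(k)\le i_{after}(j)$) instead of your two-step route through $i\le i_{after}(k)$ followed by $0.3k\le 0.3j$.
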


\begin{proof}

Like in the proofs of Lemma \ref{Lemma: first phase sub-sup} and \ref{Lemma:sub-sup}, we assume that $W^{\xi} = S^{(j)\xi}$ ($\xi$ is $+$ or $-$), else $W^{\xi}_{i+1} - W^{\xi}_{i} = 0$. Again, we have $-2 \leq S^{(j)}_{i+1} - S^{(j)}_{i} \leq 0$. Secondly, we have

\vspace{-5mm}

\begin{align*}
    & |-n s_j((i+1)/n)+ns_j(i/n) - C E_j(i+1)+C E_j(i)| \\ & \leq 
    |-n s_j((i+1)/n)+ns_j(i/n)| + |- C E_j(i+1)+C E_j(i)| \\ &= 
    O \left(\frac{y_j(i/n)}{s_{d-1}(i/n)} + \frac{E_j(i)}{n s_{d-1}(i/n)} \right) \qquad \text{by (\ref{equ: y deriv}), (\ref{equ: y approx}), and (\ref{equ: E dif second})} \\ &=  
    o(\ln(n)^{-d+1.06+j}) \qquad \text{by (\ref{equ: y approx}), (\ref{equ: E approx}), and $i \leq i_{after}(k) \leq i_{after}(j).$}
\end{align*}

Combining the inequalities completes the proof.

\end{proof}

To put this all together to prove the first part of Theorem \ref{Theorem: 2nd phase bounds}, we introduce a series of events: first, let $\mathcal{E}^{(j)+}$ denote the event that $W^{(j)+}_{i_{after}(k)} > 0$ and $\mathcal{E}^{(j)-}$ denote the event that $W^{(j)-}_{i_{after}(k)} < 0$. Let $\mathcal{E} = \left(\bigcup_{j \geq k}\mathcal{E}^{(j)+}\right) \cup \left(\bigcup_{j \geq k}\mathcal{E}^{(j)-}\right)$; we seek to bound $\mathbb{P}[\mathcal{E}]$, since $\mathcal{E}$ is the event that (\ref{equ: 2nd phase Theorem bound}) \textit{doesn't} hold for some $i \in I_k$. Next, for all $\ell \in I_k$, let $\mathcal{H}^{(j)+}_{\ell}$ be the event that $W^{(j)+}_{\ell - 1} < -E_j(\ell - 1)$ and $W^{(j)+}_{\ell} \geq -E_j(\ell)$, and let $$
    \mathcal{E}^{(j)+}_{\ell} := \mathcal{H}^{(j)+}_{\ell} \cap \{W^{(j)+}_i \geq -E_j(i) \text{ for all $i \geq \ell$}\} \cap \{W^{(j)+}_{i_{after(k)}} > 0\}.$$
(see Figure 1 for a visual representation of event $\mathcal{E}^{(j)+}_{\ell}$)

\vspace{-5mm}

\begin{center}
\begin{tikzpicture}[scale = 1]

\node (Title) at (7.5,1) {\Large Figure 1: Visual representation of event $\mathcal{E}^{(j)+}_{\ell}$};

\draw[thick, red, dashed] (0,0) -- (15,0);
\draw[thick, red, dashed] (0,-4) to[out=15,in=183] (15,-2);

\node (p1) at (0,-4.25){};
\node (p2) at (1,-4){};
\node (p3) at (2,-3.2){};
\node (p4) at (3,-3){};
\node (p5) at (4,-2){};
\node (p6) at (5,-2.5){};
\node (p7) at (6,-2.2){};
\node (p8) at (7,-2){};
\node (p9) at (8,-1.5){};
\node (p10) at (9,-1.5){};
\node (p11) at (10,-1.7){};
\node (p12) at (11,-1.2){};
\node (p13) at (12,-0.5){};
\node (p14) at (13,0.2){};
\node (p15) at (14,0.2){};
\node (p16) at (15,0.2){};

\draw (0,-5) -- (0,0.5);
\draw (0,-5) -- (15,-5);

\foreach \x in {1,...,15}
    \draw[black] ($(\x, -5)$) -- ($(\x, -4.8)$);

\filldraw
  (p1) circle (1pt)
  (p2) circle (1pt)
  (p3) circle (1pt)
  (p4) circle (1pt)
  (p5) circle (1pt)
  (p6) circle (1pt)
  (p7) circle (1pt)
  (p8) circle (1pt)
  (p9) circle (1pt)
  (p10) circle (1pt)
  (p11) circle (1pt)
  (p12) circle (1pt)
  (p13) circle (1pt)
  (p14) circle (1pt)
  (p15) circle (1pt)
  (p16) circle (1pt);

\draw ($(p1)$) -- ($(p2)$) -- ($(p3)$) -- ($(p4)$) -- ($(p5)$) -- ($(p6)$) -- ($(p7)$) -- ($(p8)$) -- ($(p9)$) -- ($(p10)$) -- ($(p11)$) -- ($(p12)$) -- ($(p13)$) -- ($(p14)$) -- ($(p15)$) -- ($(p16)$);

\node (W) at (5.5,-1){$W^{(j)+}_i$};
\draw[->] ($(W) + (0.1,-0.2)$) -- ($0.5*(p7) + 0.5*(p8) + (-0.1,0.1)$);

\node (i) at (7.5,-5.5){\Large $i$};
\node (O) at (-0.3,0){0};
\node (I) at (8.5,-3.75){\CR{$-E_j(i)$}};
\draw[->, red] ($(I) + (0.3,0.3)$) -- (9.5,-2.5);

\draw[dashed] (2,-5) -- ($(p3) + (0,0.5)$);
\node (1st) at (2,-5.3){$
\ell$};

\draw[dashed] (15,-5) -- (15,0.5);
\node (2nd) at (15,-5.3){$
i_{after}(k)$};

\draw ($0.5*(p2) + 0.5*(p3)$) circle [radius=0.8];
\node (H) at (4.5,-4){event $\mathcal{H}^{(j)+}_{\ell}$};
\draw[->] (3.4,-4) -- (2.4,-3.8);

\end{tikzpicture}

\end{center}

Similarly, for all $\ell \in I_k$, let $\mathcal{H}^{(j)-}_{\ell}$ be the event that $W^{(j)-}_{\ell - 1} > E_j(\ell - 1)$ and $W^{(j)-}_{\ell } \leq E_j(\ell)$, and let 
$$\mathcal{E}^{(j)-}_{\ell} := \mathcal{H}^{(j)-}_{\ell} \cap \{W^{(j)-}_i \leq E_j(i) \text{ for all $i \geq \ell$}\} \cap \{W^{(j)-}_{i_{after}(k)} < 0\}.$$  Finally, note that, by Lemma \ref{Lemma: start of phase}, with high probability we must have $$W^{(j)+}_{i_{after}(k-1)+1} < -E_j(i_{after}(k-1)+1) \ \text{ and } \ W^{(j)-}_{i_{after}(k-1)+1} > E_j(i_{after}(k-1)+1).$$ Furthermore, assuming these two inequalities hold (and, once again, assuming that $S^{\lambda}_i = 0$ if $\lambda < k$), then if $W^{(j)+}_{i_{after}(k)} > 0$ for some $j$, one of the events $\mathcal{E}^{(j)+}_{\ell}$ must happen; likewise, if $W^{(j)-}_{i_{after}(k)} < 0$ for some $j$, one of the events $\mathcal{E}^{(j)-}_{\ell}$ must happen; hence, $\cE^{(j)+} = \ds\bigcup_{\ell} \mathcal{E}^{(j)+}_{\ell}$ and $\cE^{(j)-} = \ds\bigcup_{\ell} \mathcal{E}^{(j)-}_{\ell}$.

We are now ready to prove the first statement of Theorem \ref{Theorem: 2nd phase bounds} in full.

\begin{proof}[Proof of the first part of Theorem \ref{Theorem: 2nd phase bounds} with fixed $k$] 

First, we fix an arbitrary $j$ (in $[k,d-2]$). We prove that $\mathbb{P}[\mathcal{E}^{(j)-}] = \exp\{-\Omega(\ln(n)^{0.036})\}$; the proof for bounding $\mathbb{P}[\mathcal{E}^{(j)+}]$ is nearly identical. We will use Corollary \ref{Cor: lower bound mod}  to bound $\mathbb{P}[\mathcal{E}^{(j)-}_{\ell}]$ for each fixed $\ell$. Given a fixed $\ell$, we define a modified stopping time $$T_{mod} := \min_{i \in [\ell,i_{after}(k)]} \{W^{(j)-}_i > E_j(i) \text{ or } i = T \}$$ (letting $T_{mod} = \infty$ if the condition doesn't hold for any $i$ in the range). Let variable $W^{\ell}_i$ be the variable $W^{(j)-}_{i}$ defined just on $i \in [\ell, i_{after}(k)]$ equipped with this stopping time (we drop the ``$(j)-$" here for convenience); i.e., $$
    W^{\ell}_i := \begin{cases}
        &W^{(j)-}_i, i < T_{mod} \\
        &W^{(j)-}_{T_{mod}}, i \geq T_{mod}.
    \end{cases} $$
Note that $(W^{\ell}_i)_i$ (over $i \in [\ell,i_{after}(k)]$) is a submartingale by Lemma \ref{Lemma:sub-sup}, since our new stopping time negates the need for the condition $W^{(j)-}_i \leq E_j(i)$; also, $(W^{\ell}_i)_i$ satisfies Lemma \ref{Lemma:bound hyp}. Since we want an upper bound for $\mathbb{P}[\mathcal{E}^{(j)-}_{\ell}]$, we can condition on event $\mathcal{H}^{(j)-}_{\ell}$, as $\mathcal{H}^{(j)-}_{\ell} \supseteq \mathcal{E}^{(j)-}_{\ell}$. Now let \vspace{-9mm}

\begin{align*}
    A_i &= -W_{\ell+i}^{\ell} + W^{\ell}_{\ell},\\
    \eta &= \ln(n)^{-d+1.06+j}, \\
    N &= 3, \\
    m &= i_{after}(k) - \ell,\\
    a &= 0.9 E_j(\ell).
\end{align*}

Note that the conditions of Corollary \ref{Cor: lower bound mod} are satisfied: $0 = A_0$ and $\eta < N/10$ are obvious, Lemma \ref{Lemma:bound hyp} gives us $-\eta \leq A_{i+1} - A_{i} \leq N$, and $(A_i)_i$ is a supermartingale since $(W^{\ell}_i)_i$ is a submartingale. We therefore implement Corollary \ref{Cor: lower bound mod}, using $m \leq \frac{dn}{2} - \ell \leq d n s_{d-1}(\ell/n)$ (by (\ref{equ: s_{d-1} bound})), (\ref{equ: y approx}), and (\ref{equ: E approx}): \vspace{-5mm}

\begin{equation}
    \mathbb{P}[A_m \geq a] \leq e^{-\frac{a^2}{3\eta N m}}+ e^{-\frac{a}{6N}} = e^{-\Omega( \ln(n)^{0.04}(n s_j(\ell/n))^{0.4})} \label{equ: apply mart inequality} + e^{-\Omega(\ln(n)^{0.05}(n s_j(\ell/n))^{0.7})}.
\end{equation}

To bound $\mathbb{P}[\mathcal{E}^{(j)-}_{\ell}]$, we show that $\mathcal{E}^{(j)-}_{\ell} \subseteq \{ A_m \geq a\}$ and apply (\ref{equ: apply mart inequality}) while conditioning on $\mathcal{H}^{(j)-}_{\ell}$. Given $\mathcal{H}^{(j)-}_{\ell}$ happens, we have $W^{\ell}_{\ell} = W^{(j)-}_{\ell} > 0.9 E_j(\ell) = a$ by (\ref{equ: E approx}), Lemma \ref{Lemma:bound hyp}, and $i \leq i_{after}(j)$. Therefore $\mathcal{E}^{(j)-}_{\ell} =  \mathcal{H}^{(j)-}_{\ell} \cap \{ W^{\ell}_{i_{after}(k)} < 0 \} \subseteq \{A_m \geq a\}$, hence $$
\mathbb{P}\left[\mathcal{E}^{(j)-}_{\ell}\right] =  e^{-\Omega( \ln(n)^{0.04}(n s_j(\ell/n))^{0.4})} + e^{-\Omega(\ln(n)^{0.05}(n s_j(\ell/n))^{0.7})}.$$

We now take a union bound to bound $\mathbb{P}[\mathcal{E}^{(j)-}]$ (using (\ref{equ: y approx}) where appropriate):

\begin{align*}
    \mathbb{P}[\mathcal{E}^{(j)-}] &\leq \sum_{\ell = i_{after}(k-1)+1}^{i_{after}(k)} \mathbb{P}\left[\mathcal{E}^{(j)-}_{\ell}\right] \\ = & \sum_{\ell=i_{trans}}^{i_{after}(k)}  \left(\exp\left\{-\Omega(\ln(n)^{0.04}(n s_j(\ell/n))^{0.4})\right\} + \exp\left\{-\Omega(\ln(n)^{0.05}(n s_j(\ell/n))^{0.7})\right\} \right)\\ = & \sum_{\ell=i_{trans}}^{i_{after}(j)}  \left(\exp\left\{- \Omega \left(\frac{(dn-2\ell)^{0.4}}{\ln(n)^{0.4d-0.44-0.4j}}\right)\right\} + \exp\left\{- \Omega \left(\frac{(dn-2\ell)^{0.7}}{\ln(n)^{0.7d-0.75-0.7j}}\right)\right\}\right) \\ = &
    \sum_{p =\lfloor\ln (n)^{d-1.01-j}\rfloor}^{\lceil n^{1 - 1/(100d)}\rceil} \left( \exp \left\{- \Omega \left(\frac{p^{0.4}}{\ln(n)^{0.4d-0.44-0.4j}}\right)\right\} + \exp \left\{- \Omega \left(\frac{p^{0.7}}{\ln(n)^{0.7d-0.75-0.7j}}\right)\right\} \right)\\ = &
    \ln(n)^{d-1.01-j}\sum_{q=1}^{\infty} \left(\exp\left\{-\Omega (q^{0.4} \ln(n)^{0.036})\right\} + \exp\left\{-\Omega (q^{0.7} \ln(n)^{0.043})\right\}\right) \\=&
    \exp\{-\Omega(\ln(n)^{0.036})\}. 
\end{align*}

We give a note for the aspects of the proof of bounding $\mathbb{P}[\mathcal{E}^{(j)+}]$ that are different from the above: use the variable $W^{(j)+}_i$ instead of $W^{(j)-}_i$, events $\mathcal{E}^{(j)+}_{\ell}$ instead of $\mathcal{E}^{(j)-}_{\ell}$, and $\mathcal{H}^{(j)+}_{\ell}$ instead of $\mathcal{H}^{(j)-}_{\ell}$. Define $T_{mod}$ instead as $$
    T_{mod} := \min_{i \in [\ell,i_{after}(k)]} \{W^{(j)+}_i < -E_j(i) \text{ or } i = T \}.$$

Finally, use Corollary $\ref{Cor: upper bound mod}$ instead of Corollary $\ref{Cor: lower bound mod}$ (which will be slightly easier to implement).

\end{proof}

\section{Final phase} \label{Section: Final}

We continue our proof by induction of Theorem \ref{Theorem: 2nd phase bounds} with our fixed index $k$; now we prove the second part. We assume the first part of Theorem \ref{Theorem: 2nd phase bounds} to hold, as well as the second part of the Theorem for lesser $k$; for example, we have $S^{(k-1)}_{i_{after}(k-1)} = 0$ w.h.p. In this section we focus on the $d$-process for a narrow domain of $i$. Let $$
    i_{before}(k) := \left\lfloor \frac{dn}{2} - \ln(n)^{d-0.8-k} \right\rfloor.$$
We will consider the $d$-process starting at step $i_{before}(k)$ assuming that (\ref{equ: 2nd phase Theorem bound}) holds at $i = i_{before}(k)$; we do not need the first part of Theorem \ref{Theorem: 2nd phase bounds} in this section otherwise. We do not use martingale arguments here, but rather we show that the distribution of the sequence of time steps at which a vertex of degree $k$ is chosen from the $d$-process is similar to a uniform distribution over all possible such sequences. Theorem \ref{Theorem: 2nd phase bounds}, (\ref{equ: y approx initial}), and (\ref{equ: E approx}) tell us that w.h.p. we will have $\sim \frac{2(d-1)!}{k!}\ln(n)^{0.2}$ vertices of degree at most $k$ (or degree equal to $k$; they are the same here) left when there are $\lfloor \ln(n)^{d-0.8-k} \rfloor$ steps left; hence, the average distance between steps at which we remove vertices of degree $k$ is $\frac{k!}{2(d-1)!} \ln(n)^{d-1-k}$. When there are this many steps left times $r$, we expect $r$ such vertices to remain, and for the probability that there are no vertices of degree $k$ to be $e^{-r}$. Most of this section will build towards proving the following Theorem:

\begin{thm} \label{theorem: last}
    Let $L(n)$ be an integer-valued function so that $L(n) = \Theta(\ln (n)^{0.2})$ and let $J(n) = \lfloor \frac{dn}{2}\rfloor - i_{before}(k) \sim \ln(n)^{d-0.8-k}$. Let $H$ be {\it any} graph with $i_{before}(k)$ edges which satisfies (\ref{equ: 2nd phase Theorem bound}) at $i = i_{before}(k)$, has no vertices of degree at most $k-1$, and has $L(n)$ vertices of degree $k$. Also, let $r \in \RR^+$ be arbitrary. Then $$\Pr\left[S^{(k)}_ {\lfloor\frac{dn}{2} - \frac{r J(n)}{L(n)} \rfloor} = 0 \ \bigg| \ G_{i_{before}(k)} = H\right] \to e^{-r}.$$
\end{thm}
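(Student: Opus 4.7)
The plan is to show that $N := S^{(k)}_{I}$, with $I := \lfloor \tfrac{dn}{2} - \tfrac{rJ(n)}{L(n)} \rfloor$, converges in distribution to $\mathrm{Poisson}(r)$ conditional on $G_{i_{before}(k)} = H$; this immediately yields $\Pr[N = 0 \mid G_{i_{before}(k)} = H] \to e^{-r}$. By the method of factorial moments, it suffices to prove that for each fixed $q \geq 1$,
\begin{equation*}
    \EE[(N)_q \mid G_{i_{before}(k)} = H] = \sum_{(v_1, \dots, v_q)} \Pr[v_1, \dots, v_q \text{ all survive to step } I \mid G_{i_{before}(k)} = H] \longrightarrow r^q,
\end{equation*}
where the sum ranges over the $L(L-1)\cdots(L-q+1) \sim L^q$ ordered tuples of distinct degree-$k$ vertices of $H$, and ``survive'' means never being an endpoint of the chosen edge. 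Thus the task reduces to proving that a fixed tuple survives with probability $\sim (r/L)^q$.

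For such a tuple, the one-step conditional probability is straightforward. Given $G_{i-1}$ and that each $v_j$ still has degree $k$, the number of legal edges incident to $\{v_1, \dots, v_q\}$ is $q(S^{(d-1)}_{i-1} - 1) - O(q^2 + qk)$ (accounting for edges among the $v_j$ and each $v_j$'s existing unsaturated neighbors), while by (\ref{equ: U, Z bounds}) the denominator is $\binom{S^{(d-1)}_{i-1}}{2}(1 + o(1))$. Hence the probability that some $v_j$ is chosen at step $i$ is $p_i = \tfrac{2q}{S^{(d-1)}_{i-1}}\bigl(1 + O(q/S^{(d-1)}_{i-1})\bigr)$.

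The crucial estimate is the deterministic bound $S^{(d-1)}_i = 2m_i(1 + O(\ln(n)^{-0.8}))$ uniformly for $i \in [i_{before}(k), I]$, where $m_i := \lfloor \tfrac{dn}{2} \rfloor - i$. Since $H$ has no vertex of degree at most $k-1$ and no such vertex can ever be created, $S^{(k-1)}_i = 0$ for all $i \geq i_{before}(k)$; rearranging (\ref{equ: Y sum}) gives $2 m_i - S^{(d-1)}_i = \sum_{j=k}^{d-2}(d-j-1) Y^{(j)}_i \leq (d-1) S^{(d-2)}_i$. Monotonicity of $S^{(d-2)}$, the assumption that (\ref{equ: 2nd phase Theorem bound}) holds at $i_{before}(k)$, and (\ref{equ: y approx initial}) together give $S^{(d-2)}_i \leq S^{(d-2)}_{i_{before}(k)} = O(J/\ln n)$. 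Since $m_i \geq m^* := \lceil rJ/L \rceil = \Theta(J/\ln(n)^{0.2})$ for $i \leq I$, the claimed bound on $S^{(d-1)}_i$ follows, and hence $p_i = (q/m_{i-1})(1 + O(\ln(n)^{-0.8}))$ deterministically.

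Finally,
\begin{equation*}
    \Pr[v_1, \dots, v_q \text{ all survive}] = \EE\!\left[\prod_{i=i_{before}(k)+1}^{I}(1 - p_i)\right] = \prod_{m = m^*+1}^{J}\!\left(1 - \tfrac{q}{m}\right) \cdot (1 + o(1)),
\end{equation*}
since the accumulated multiplicative log-error is $O(\ln(n)^{-0.8}) \cdot \sum_{m^*+1}^{J} q/m = O(\ln(n)^{-0.8} \ln(L/r)) = o(1)$. The surviving product telescopes to $m^*(m^*-1) \cdots (m^*-q+1) / [J(J-1) \cdots (J-q+1)] \sim (m^*/J)^q = (r/L)^q$ for fixed $q$ (using $m^* \to \infty$). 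Multiplying by the $\sim L^q$ ordered tuples gives $\EE[(N)_q] \to r^q$, completing the Poisson convergence. The main obstacle is the deterministic control of $S^{(d-1)}_i$ throughout the window without any new martingale analysis; the argument succeeds because $S^{(d-2)}_{i_{before}(k)}$ is already a factor of $\ln n$ smaller than $m^*$, so plain monotonicity of $S^{(d-2)}$ propagates this smallness across the entire window.
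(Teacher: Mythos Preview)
Your argument is correct and takes a genuinely different route from the paper. Both proofs rest on the same deterministic estimate you isolate: because $H$ has no vertex of degree $\le k-1$ and (\ref{equ: 2nd phase Theorem bound}) holds at $i_{before}(k)$, the deficit $dn-2i-S^{(d-1)}_i\le (d-1)S^{(d-2)}_i\le (d-1)S^{(d-2)}_{i_{before}(k)}=O(J/\ln n)$ throughout the window, so $S^{(d-1)}_i=2m_i(1+O((\ln n)^{-0.8}))$. The paper exploits this by generating a length-$2(t_{end}-t_{start})$ binary string recording, at each half-step, whether the chosen vertex had degree $k$; it then couples this distribution $\mathcal P$ to the distribution $\mathcal Q$ obtained by truncating a uniformly random string with $L(n)$ ones and $2J(n)-L(n)$ zeros. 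A step-by-step ratio lemma (Lemma~\ref{conditional string}) shows $\mathcal P(\alpha)/\mathcal Q(\alpha)=1+o(1)$ for all $\alpha$ without two consecutive ones, and the event of consecutive ones is negligible under both measures; the desired probability is then read off from the elementary hypergeometric calculation $\binom{2(t_{end}-t_{start})}{L}/\binom{2J}{L}\to e^{-r}$ under $\mathcal Q$.

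Your factorial-moment approach is more economical: it bypasses the coupling lemma entirely and reduces to a single telescoping product $\prod_{m=m^*+1}^{J}(1-q/m)=(m^*)_q/(J)_q\sim (r/L)^q$ per $q$-tuple, with the accumulated multiplicative error $O((\ln n)^{-0.8}\log\log n)=o(1)$ controlled exactly as you say. As a bonus you obtain the stronger conclusion that $S^{(k)}_I$ converges in distribution to $\mathrm{Poisson}(r)$, not merely that its zero-probability tends to $e^{-r}$. The paper's approach, on the other hand, gives finer information about the full trajectory of hitting times (the entire binary string is close in distribution to a simple model), which could be useful if one wanted joint statements about several values of $r$ simultaneously; for the theorem as stated, your argument is cleaner.
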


First, we note that, given that (\ref{equ: 2nd phase Theorem bound}) holds for $i = i_{before}(k)$ and by (\ref{equ: Y sum}), that w.h.p. \newline $dn - 2i_{before}(k) - S^{(d-1)}_{i_{before}(k)} = O\left(\frac{dn-2i_{before}}{\ln(n)}\right)$ (consider $S^{(d-2)}_{i_{before(k)}}$); hence, for all \newline $i \in [i_{before}(k),i_{after}(k)]$:

\begin{equation}    
S^{(d-1)}_{i} = dn - 2i + 
O\left(\frac{dn-2i_{before}}{\ln(n)}\right) = (dn - 2i) \left(1 + O\left(\frac{1}{\ln(n)^{0.79}}\right) \right).\label{equ: S^{(d-1)} close}
\end{equation}

Let $t_{start} =  i_{before}(k)$ and $t_{end} = \lfloor dn/2 -  r J(n)/L(n) \rfloor$.  Consider the $d$-process between $t_{start}$ and $t_{end}$, given that $G_{t_{start}} = H$. At each step two vertices are chosen; now assume that the pair at each step is ordered uniformly at random, so that a sequence of $2(t_{end} - t_{start})$ vertices is generated. We also generate a \textit{binary} sequence simultaneously, each digit corresponding to a vertex: after a pair of vertices is picked for the vertex sequence, for each of the two vertices (in the order that they are randomly shuffled) append a ``1" to the binary sequence if the corresponding vertex had degree $k$ just before it was picked, and append a ``0" otherwise. Let $\mathcal{P}: \{0,1\}^{2(t_{end}-t_{start})} \to [0,1]$ be the corresponding probability function that arises from this process (note that, if $\gamma$ is a string with more than $L(n)$ ``1"'s, then $\mathcal{P}(\gamma)$ = 0). Note that $\mathcal{P}$ depends on the graph $H$. We compare this to a second probability function $\mathcal{Q}: \{0,1\}^{2(t_{end}-t_{start})} \to [0,1]$, which is defined by picking a binary string with $L(n)$ 1's and $2 J(n) - L(n)$ 0's uniformly at random, then taking the first $2(t_{end} - t_{start})$ digits. \\

For any binary sequence $\gamma$ with $\ell$ digits, and $I \subset [\ell]$, let $\gamma_{I}$ be the subsequence with indices from $I$; for example, $\gamma_{[a]}$ would be the first $a$ digits of $\gamma$, and $\gamma_{ \{a\}}$ would just be the $a$-th digit (for notation's sake, let ``$\gamma_{[0]}$" be the empty string). Also let $\|\gamma \|$ denote the number of 1's in $\gamma$. We now present the following Lemma:

\begin{lem} \label{conditional string}
    Let $\alpha$ be an arbitrary $2(t_{end} - t_{start})$ length binary sequence with at most $L(n)$ 1's, and let $\gamma$ be the random binary sequence according to either  $\mathcal{P}$ or $\mathcal{Q}$. Let $i \in [2(t_{end} - t_{start})]$. Then (letting $a_{\{0\}} = 1$ for sake of notation): $$
        \frac{\mathbb{P}_{\mathcal{P}}[\gamma_{[i]} = \alpha_{[i]} \mid \gamma_{[i-1]} = \alpha_{[i-1]}]}{\mathbb{P}_{\mathcal{Q}}[\gamma_{[i]} = \alpha_{[i]} \mid \gamma_{[i-1]} = \alpha_{[i-1]}]} \begin{cases}
  = 1 + O\left(\frac{1}{J(n)\ln(n)^{0.39}}\right) &\text{ if } \alpha_{\{i\}} = 0 \text{ and } \alpha_{\{i-1\}} = 0 \\
  = 1 + O\left(\frac{\ln(n)^{0.4}}{J(n)}\right) &\text{ if } \alpha_{\{i\}} = 0 \text{ and } \alpha_{\{i-1\}} = 1 \\
  = 1 + O\left(\frac{1}{\ln(n)^{0.79}}\right) &\text{ if } \alpha_{\{i\}} = 1 \text{ and } \alpha_{\{i-1\}} = 0 \\
  \leq 1 + O\left(\frac{1}{\ln(n)^{0.79}}\right) &\text{ if } \alpha_{\{i\}} = 1 \text{ and } \alpha_{\{i-1\}} = 1.
\end{cases}$$
\end{lem}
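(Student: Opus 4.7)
The plan is to compute $\Pr_{\mathcal{P}}$ and $\Pr_{\mathcal{Q}}$ directly and take ratios, splitting on the parity of $i$ (positions $2\ell-1$ and $2\ell$ arise from the same edge addition in the $d$-process and are thus correlated under $\mathcal{P}$ but not under $\mathcal{Q}$) together with the value of $\alpha_{\{i-1\}}$. Writing $Y := L(n) - \|\alpha_{[i-1]}\|$ and $N := 2J(n) - (i-1)$, the $\mathcal{Q}$-transitions are immediate: $\Pr_{\mathcal{Q}}[\gamma_{\{i\}} = 1 \mid \gamma_{[i-1]} = \alpha_{[i-1]}] = Y/N$ and $\Pr_{\mathcal{Q}}[\gamma_{\{i\}} = 0 \mid \cdot] = (N-Y)/N$.

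For odd $i = 2\ell - 1$, the graph state immediately before position $i$ is $G_{t_{start}+\ell-1}$; after random shuffling of the $\ell$-th edge's endpoints, the first endpoint is distributed proportionally to its valid-neighbor count, so counting ordered valid pairs yields
\[
\Pr_{\mathcal{P}}[\gamma_{\{i\}} = 1 \mid \gamma_{[i-1]} = \alpha_{[i-1]}] = \frac{Y^{(k)}(S^{(d-1)}-1) - O(dY^{(k)})}{S^{(d-1)}(S^{(d-1)}-1) - 2 Z_i},
\]
which I simplify via $Z_i = O(S^{(d-1)})$, $Y^{(k)} = Y$, and (\ref{equ: equ: S^{(d-1)} close}) to obtain $\tfrac{Y}{N}\bigl(1 + O(1/\ln(n)^{0.79})\bigr)$. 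For even $i = 2\ell$, I use Bayes on the uniform ordered-pair model: the probability is a ratio of ordered-pair counts whose denominator (over valid pairs with first endpoint of the $\alpha_{\{i-1\}}$-appropriate degree) equals the relevant vertex count times $S^{(d-1)}-1$ plus an $O(d)$ non-neighbor correction, and whose numerator restricts the second endpoint to degree $k$; after simplification, $\Pr_{\mathcal{P}}[\gamma_{\{i\}} = 1 \mid \cdot] = \tfrac{Y^{(k)} - [\alpha_{\{i-1\}}=1]}{S^{(d-1)}-1}\bigl(1 + O(d/S^{(d-1)})\bigr)$, where the $-[\alpha_{\{i-1\}}=1]$ conveniently combines with $Y^{(k)}$ into $Y$.

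Dividing by $\Pr_{\mathcal{Q}}$ yields the four cases. For cases C and D ($\alpha_{\{i\}} = 1$) we inherit the $O(1/\ln(n)^{0.79})$ error from the $S^{(d-1)}$ approximation directly. For cases A and B ($\alpha_{\{i\}} = 0$), the complement-ratio identity $(1 - (Y/N)(1+\epsilon))/(1 - Y/N)$ multiplies the underlying $\epsilon = O(1/\ln(n)^{0.79})$ by $Y/(N-Y) = O(\ln(n)^{0.2}/J(n))$, producing the $1/J(n)$-scale bounds; subleading $O(1/N)$ corrections from the $\pm 1$ offset between $S^{(d-1)}$ and $N$ are absorbed into the quoted error.

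The main obstacle is case D under even $i$: a careful Bayes computation actually produces a term $-2E_{kk}$ in the numerator, where $E_{kk}$ counts edges with both endpoints of degree $k$, and since $E_{kk}$ could a priori be as large as $\tfrac{k}{2}Y^{(k)}$, this would generate a relative error of order $O(1/Y^{(k)}) = O(1/\ln(n)^{0.2})$, exceeding the stated bound. Fortunately the lemma only demands an upper bound here, and $-2E_{kk}$ contributes nonpositively, so dropping it gives a valid upper bound $\Pr_{\mathcal{P}}[\gamma_{\{i\}} = 1 \mid \cdot] \leq \tfrac{Y^{(k)}-1}{S^{(d-1)}-1-O(d)}$, which after applying (\ref{equ: equ: S^{(d-1)} close}) matches the claimed $1 + O(1/\ln(n)^{0.79})$. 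The remaining delicacy is bookkeeping the $\pm 1$ shifts across parities and verifying that the error estimates on $S^{(d-1)}$ from (\ref{equ: equ: S^{(d-1)} close}) hold deterministically across all graphs consistent with the conditioning (which they do, since $S^{(d-2)}$ is monotonically nonincreasing and the bound at $i_{before}(k)$ is assumed).
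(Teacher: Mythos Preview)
Your approach is essentially the same as the paper's: compute $\Pr_{\mathcal{Q}}$ directly, split $\Pr_{\mathcal{P}}$ on the parity of $i$ (and on $\alpha_{\{i-1\}}$ when $i$ is even) via ordered-pair counts, recognize that the $E_{kk}$ term forces only an upper bound in case D for even $i$, and pass to cases A and B by complementation. One arithmetic slip: you write $Y/(N-Y) = O(\ln(n)^{0.2}/J(n))$, but since $N = 2J(n)-(i-1)$ can be as small as $\Theta(J(n)/L(n)) = \Theta(J(n)/\ln(n)^{0.2})$ near $t_{end}$, the correct bound is $Y/(N-Y) = O(\ln(n)^{0.4}/J(n))$; with this fix your case~A error becomes exactly $O\bigl(\ln(n)^{-0.79}\cdot \ln(n)^{0.4}/J(n)\bigr) = O(1/(J(n)\ln(n)^{0.39}))$ as required, and case~B needs the trivial bound $\Pr_{\mathcal P}\le 1$ for the upper side (since the complement-ratio identity only yields one direction there).
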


\begin{proof}
    First, we consider the cases where $\alpha_{\{i \}} = 1$. We have

\begin{equation} \label{Q string}
    \mathbb{P}_{\mathcal{Q}}[\gamma_{\{i\}} = 1 \mid \gamma_{[i-1]} = \alpha_{[i-1]}] = \frac{L(n) - \| \alpha_{[i-1]} \|}{2J(n) - (i-1)}.
\end{equation}

For the probability space $\mathcal{P}$, we need to consider three subcases: we need to consider whether $i$ is even or odd, and if it is even, whether $\alpha_{ \{i-1\} }$ is 0 or 1, since each step of the $d$-process outputs two digits of the binary string. Let's say that $\tau$ corresponds to the last step in the $d$-process before the $i$-th binary digit is generated (recall that pairs of digits are generated together). Then if $i$ is odd:

\vspace{-5mm}

\begin{align}
    \mathbb{P}_{\mathcal{P}}[\gamma_{\{i\}} = 1 \mid \gamma_{[i-1]} = \alpha_{[i-1]}] \nonumber &= -\frac{1}{2}\EE[S^{(k)}_{\tau+1} - S^{(k)}_{\tau} | G_{\tau}] \\&= \frac{S^{(k)}_{\tau}}{S^{(d-1)}_{\tau}}\left(1 + O\left(\frac{1}{dn-2{\tau}}\right) \right) \nonumber \qquad \text{by (\ref{equ: mult error})}\\&= \frac{S^{(k)}_{\tau}}{2 \lfloor dn/2-\tau \rfloor}\left(1 + O\left(\frac{1}{\ln(n)^{0.79}}\right)\right) \nonumber \qquad \text{by (\ref{equ: S^{(d-1)} close})} \\&= \frac{L(n) - \| \alpha_{[i-1]} \|}{2J(n) - (i-1)}\left(1 + O\left(\frac{1}{\ln(n)^{0.79}}\right)\right). \label{equ: i odd}
\end{align} 

If $i$ is even and $\alpha_{ \{i-1\} } = 1$, then $S^{(k)}_{\tau} = L(n) - \| \alpha_{[i-1]} \| + 1$. At step $\tau$ there are \newline $S^{(k)}_{\tau} (S^{(d-1)}_{\tau} + O(1))$ ordered pairs of vertices whose first vertex has degree $k$, and {\it at most} $2\binom{S^{(k)}_{\tau}}{2}$ ordered pairs of vertices both with degree $k$; hence:

\vspace{-5mm}

\begin{align}
    \mathbb{P}_{\mathcal{P}}[\gamma_{\{i\}} = 1 \nonumber \mid \gamma_{[i-1]} = \alpha_{[i-1]}] &\leq \frac{S^{(k)}_{\tau} - 1}{S^{(d-1)}_{\tau} + O(1)}\\&= 
    \frac{S^{(k)}_{\tau} - 1}{2 \lfloor dn/2-\tau \rfloor}\left(1 + O\left(\frac{1}{\ln(n)^{0.79}}\right)\right) \qquad \text{by (\ref{equ: S^{(d-1)} close})} \nonumber \\&=
    \frac{L(n) - \| \alpha_{[i-1]} \|}{2J(n) - (i-1)}\left(1 + O\left(\frac{1}{\ln(n)^{0.79}}\right)\right). \label{equ: i even prev 1}
\end{align} 
Hence the final inequality of the Lemma holds by (\ref{equ: i odd}) and (\ref{equ: i even prev 1}). \\

Next, consider the case where $i$ is even and $\alpha_{ \{i-1\} } = 0$; here, $S^{(k)}_{\tau} = L(n) - \| \alpha_{[i-1]} \|$ once again. At step $\tau$ there are $(S^{(d-1)}_{\tau} - S^{(k)}_{\tau}) (S^{(d-1)}_{\tau} + O(1))$ ordered pairs of vertices whose first vertex has degree greater than $k$, and $S^{(k)}_{\tau} (S^{(d-1)}_{\tau} - S^{(k)}_{\tau} + O(1))$ ordered pairs of vertices for which the first vertex has degree greater $k$ and the second vertex has degree $k$ (one can ``pick the second vertex first" to see this). Hence:

\vspace{-5mm}

\begin{align}
    \mathbb{P}_{\mathcal{P}}[\gamma_{\{i\}} = 1 \nonumber \mid \gamma_{[i-1]} = \alpha_{[i-1]}] &= \frac{S^{(k)}_{\tau} }{S^{(d-1)}_{\tau}} \left(1 + O \left(\frac{1}{S^{(d-1)}_{\tau}}\right)\right) \qquad \text{since $S^{(d-1)}_{\tau} \gg S^{(k)}_{\tau}$ here}
    \\&=
    \frac{S^{(k)}_{\tau} }{2 \lfloor dn/2-\tau \rfloor} \left(1 + O \left(\frac{1}{\ln(n)^{0.79}}\right)\right) \nonumber \qquad \text{by (\ref{equ: S^{(d-1)} close})}
    \\&=
    \frac{L(n) - \| \alpha_{[i-1]} \|}{2J(n) - (i-1)}\left(1 + O\left(\frac{1}{\ln(n)^{0.79}}\right)\right), \label{equ: i even prev 0}
\end{align} 

hence the third equality of the Lemma holds by (\ref{equ: i odd}) and (\ref{equ: i even prev 0}).\\

Now consider $\alpha_{\{i\}} = 0$. By  modifying (\ref{Q string}) to accomodate $\gamma_{\{i\}} = 0$, we have

\begin{equation} \label{new Q string}
    \mathbb{P}_{\mathcal{Q}}[\gamma_{\{i\}} = 0 \mid \gamma_{[i-1]} = \alpha_{[i-1]}] = 1 - \frac{L(n) - \| \alpha_{[i-1]} \|}{2J(n) - (i-1)}.
\end{equation}

Similarly, by modifying (\ref{equ: i odd}) and (\ref{equ: i even prev 0}), if $a_{\{i-1\}} = 0$ then

\begin{equation} \label{equ: new P string a = 0}
    \mathbb{P}_{\mathcal{P}}[\gamma_{\{i\}} = 0 \mid \gamma_{[i-1]} = \alpha_{[i-1]}]
    =
    1 - \frac{L(n) - \| \alpha_{[i-1]} \|}{2J(n) - (i-1)}\left(1 + O\left(\frac{1}{\ln(n)^{0.79}}\right)\right).
\end{equation}

By modifying (\ref{equ: i odd}) and (\ref{equ: i even prev 1}), if $a_{\{i-1\}} = 1$, then

\vspace{-5mm}

\begin{align}
    \mathbb{P}_{\mathcal{P}}[\gamma_{\{i\}} = 0 \mid \gamma_{[i-1]} = \alpha_{[i-1]}]
    & \geq
    1 - \frac{L(n) - \| \alpha_{[i-1]} \|}{2J(n) - (i-1)}\left(1 + O\left(\frac{1}{\ln(n)^{0.79}}\right)\right) \nonumber \\&=
    1 + O\left(\frac{L(n) - \| \alpha_{[i-1]} \|}{2J(n) - (i-1)}\right) \label{equ: new P string a = 1}.
\end{align}

Since $L(n) = \Theta(\ln(n)^{0.2})$ and $$2J(n) - (i-1) = 2 \lfloor dn/2 - \tau \rfloor = \Omega(dn/2 - t_{end}) = \Omega(J(n)/\ln(n)^{0.2}),$$ then $\frac{L(n) - \| \alpha_{[i-1]} \|}{2J(n) - (i-1)} = O\left(\frac{\ln(n)^{0.4}}{J(n)}\right)$. Therefore the ratio of (\ref{equ: new P string a = 0}) and (\ref{new Q string}) is $1 + O \left(\frac{1}{J(n) \ln(n)^{0.39}}\right)$, verifying the first inequality of the Lemma, and the ratio of (\ref{equ: new P string a = 1}) and (\ref{new Q string}) is $1 + O \left(\frac{\ln(n)^{0.4}}{J(n)}\right)$, verifying the second inequality of the Lemma.
    
\end{proof}

\begin{proof}[Proof of Theorem \ref{theorem: last}] First, let $\alpha$ be an arbitrary string which satisfies the criteria in Lemma \ref{conditional string}. By using the Lemma \ref{conditional string} recursively:

\vspace{-5mm}

\begin{align}
    \frac{\mathbb{P}_{\mathcal{P}}[\gamma = \alpha]}{\mathbb{P}_{\mathcal{Q}}[\gamma = \alpha]} &\leq \exp \left\{  O\left(J(n) \frac{1}{J(n) \ln(n)^{0.39}} + L(n) \left(\frac{1}{\ln(n)^{0.79}} + \frac{\ln(n)^{0.4}}{J(n)}\right)\right) \right\}\nonumber \\&=
    1 + o(1), \label{equ: string negligible}
\end{align}

and if $\alpha$ is an arbitrary string {\it with no two consecutive 1's} which satisfies the criteria in Lemma \ref{conditional string}, then by similar logic,

\begin{equation}
    \frac{\mathbb{P}_{\mathcal{P}}[\gamma = \alpha]}{\mathbb{P}_{\mathcal{Q}}[\gamma = \alpha]} = 1 + o(1). \label{equ: string important}
\end{equation}

Let $\mathcal{C}$ be the event that $\gamma$ has two consecutive 1's; we consider $\mathbb{P}[ \, \mathcal{C} \mid G_{t_{start}} = H]$. We consider probability space $\mathcal{Q}$ first. Recall that $\alpha$ is a string that has $\sim 2J(n) = \Omega(\ln(n)^{1.2})$ characters and at most $L(n) = \Theta(\ln(n)^{0.2})$ 1's. Because $\mathcal{Q}$ is a truncation of a uniform distribution, the probability of having two consecutive 1's will be $O\left(\frac{(L(n))^2}{J(n)}\right) = O(\ln(n)^{-0.8})$. Hence, by (\ref{equ: string negligible}) we must have

\begin{equation}
    \mathbb{P}_{\mathcal{P}}[ \, \mathcal{C} \mid G_{t_{start}} = H] = o(1) \ \text{ and } \ \mathbb{P}_{\mathcal{Q}}[ \, \mathcal{C} \mid G_{t_{start}} = H] = o(1). \label{equ: negligible strings}
\end{equation}

We now combine (\ref{equ: string important}) and (\ref{equ: negligible strings}) to prove Theorem \ref{theorem: last} (for ease of notation, assume we are given $G_{t_{start}} = H$):

\vspace{-5mm}

\begin{align*}
    \mathbb{P}\left[S^{(k)}_{\lfloor \frac{dn}{2} - \frac{r J(n)}{L(n)} \rfloor} = 0 \right] &= \mathbb{P}_{\mathcal{P}}[\|\gamma\| = L(n)] \\&=
    \mathbb{P}_{\mathcal{P}}[\|\gamma\| = L(n) \ | \ \mathcal{C}]\mathbb{P}_{\mathcal{P}}[\mathcal{C}] + \mathbb{P}_{\mathcal{P}}[\|\gamma\| = L(n) \ | \ \overline{\mathcal{C}}]\mathbb{P}_{\mathcal{P}}[\overline{\mathcal{C}}] \\&=
    \mathbb{P}_{\mathcal{Q}}[||\gamma|| = L(n)] + o(1) \qquad \text{by (\ref{equ: string important}) and (\ref{equ: negligible strings})}\\&=
    \frac{\binom{2(t_{end}-t_{start})}{L(n)}}{\binom{2J(n)}{L(n)}} + o(1) \\&= \frac{\binom{2J(n) - 2(\lfloor r J(n)/L(n) \rfloor)}{L(n)}}{\binom{2J(n)}{L(n)}} + o(1) \\&=
    \left(1-\frac{r(1+ o(1))}{L(n)}\right)^{L(n)} + o(1) \\& =
    e^{-r} + o(1).
\end{align*}

\end{proof}

We can now complete the proof of the second statement of Theorem \ref{Theorem: 2nd phase bounds} at value $k$. Roughly speaking, we will use Theorem \ref{theorem: last} with $L(n) \approx \frac{2(d-1)! \ln(n)^{0.2}}{k!}$, so $\frac{dn}{2} - i(r_{k},k) \approx \frac{r_k J(n)}{L(n)}.$
First, note that $S^{(k)}_{i_{after}(k)} = 0$ (w.h.p.) comes automatically when the rest of the statement is proved (by putting $r_{\ell} = 0$ for $\ell < k$ and having $r_k \to 0$). Let $\mathcal{G}_{\ell}$ be the event that $S^{(\ell)}_{ \lfloor i(r_{\ell},\ell)\rfloor}  = 0$ and $\mathcal{G} = \bigcap_{\ell \leq k} \mathcal{G}_{\ell}$, let $\cF$ be the event that (\ref{equ: 2nd phase Theorem bound}) holds for $i = i_{before}(k)$ and $S^{(j)}_{i_{before(k)}} = 0$ for $j < k$, and let $\mathcal{A} = \mathcal{F} \cap \bigcap_{\ell < k} \mathcal{G}_{\ell}$. Also, let $$\mathcal{I} = [ns_{k}(i_{before}(k)/n) - 4E_k(i_{before}(k)),ns_{k}(i_{before}(k)/n) + 4E_k(i_{before}(k))].$$ Note that, by part 1 of 
Theorem \ref{Theorem: 2nd phase bounds}, by induction on the second part Theorem \ref{Theorem: 2nd phase bounds}, and since $i_{before}(k) > i_{after}(k-1)$, $\cF$ happens with probability $1 - o(1)$. Therefore:  

\vspace{-5mm}

\begin{align*}
    \mathbb{P}[\mathcal{G}] &= \mathbb{P}\left[\mathcal{G}_{k} \cap \mathcal{A} \right] + o(1)\\&=
    \sum_{p \in \mathcal{I}} \mathbb{P}\left[\mathcal{G} \ \bigg| \  \mathcal{A} \cap (S^{(k)}_{i_{before}(k)} = p) \right] \mathbb{P}\left[\mathcal{A} \cap (S^{(k)}_{i_{before}(k)} = p) \right] + o(1).
\end{align*}

We can now apply (\ref{equ: y approx initial}), (\ref{equ: E approx}), and Theorem \ref{theorem: last} to get $$\mathbb{P}\left[\mathcal{G} \ \bigg| \  \mathcal{A} \cap (S^{(k)}_{i_{before}(k)} = p) \right] = e^{-r_{\ell}} + o(1)$$

for $p \in \mathcal{I}$. We note that all $o(1)$ functions in the sum can be made to be the same by carefully reviewing the proof of Theorem \ref{theorem: last}. Therefore:

\vspace{-5mm}

\begin{align*}
    \mathbb{P}[\mathcal{G}] &= \sum_{p \in \mathcal{I}} (e^{-r_{\ell}} + o(1)) \mathbb{P}\left[\mathcal{A} \cap (S^{(k)}_{i_{before}(k)} = p)  \right] +o(1) \qquad  \\&=
    e^{-r_{\ell}} \sum_{p \in \mathcal{I}}\mathbb{P}\left[\mathcal{A} \cap (S^{(k)}_{i_{before}(k)} = p)  \right] + o(1) \\&=
    e^{-r_{\ell}} \mathbb{P}\left[\bigcap_{\ell < k} \mathcal{G}_{\ell}\right] + o(1) \qquad \text{by Theorem \ref{Theorem: 2nd phase bounds}} \\&=
    \exp \left\{ \sum_{\ell = 0}^{k} e^{- r_{\ell}} \right\} + o(1) \qquad \text{by induction on Theorem \ref{Theorem: 2nd phase bounds},}
\end{align*}

proving Theorem \ref{Theorem: 2nd phase bounds}.

\section*{Acknowledgements}

Thanks to my advisor Tom Bohman, who gave me much guidance and input, and to the National Science Foundation for its financial support. Competing interests: The author declares none.

\bibliographystyle{plain}

\end{document}